   \def\MR#1{}
\newcommand\Item[1][]{%
  \ifx\relax#1\relax  \item \else \item[#1] \fi
  \abovedisplayskip=0pt\abovedisplayshortskip=0pt~\vspace*{-\baselineskip}}
\theoremstyle{plain}
\newtheorem{theorem}{Theorem}[section]
\newtheorem{claim}[theorem]{Claim}
\newtheorem*{taylor}{Taylor's Theorem}
\newtheorem*{azuma}{Azuma-Hoeffding's Inequality}
\newtheorem*{freedman}{Freedman's Inequality}
\theoremstyle{definition}
\def\a{\alpha}  \def\d{\delta} \def\D{\Delta}
\def\G{\Gamma}  \def\k{\kappa} 
     \def\l{\ell}
\def\t{\tau}
\newcommand{\rbrac}[1]{\left(#1\right)}
\newcommand{\abrac}[1]{\left| #1\right|}
\def\E{\mathbb{E}}
\def\G{\mathbb{G}}
\def\Var{\mbox{{\bf Var}}}
\newcommand{\eps}{\varepsilon}
\newcommand{\sqbs}[1]{\left[ #1 \right]}
\newcommand{\Mean}[1]{\E\sqbs{#1}}
\newcommand{\ignore}[1]{}
\newcommand{\beq}[1]{\begin{equation}\label{#1}}
\newcommand{\eeq}{\end{equation}}
\newcommand{\mc}[1]{\mathcal{#1}}
\newcommand{\mct}[1]{\tilde{\mathcal{#1}}}
\title[An introduction to the differential equation method]{A gentle introduction to the differential equation method and dynamic concentration}
\author{Patrick Bennett}
\address{Department of Mathematics, Western Michigan University, Kalamazoo, MI, USA}
\thanks{The first author was supported in part by Simons Foundation Grant \#426894.}
\email{\tt patrick.bennett@wmich.edu}
\author{Andrzej Dudek}
\address{Department of Mathematics, Western Michigan University, Kalamazoo, MI, USA}
\thanks{The second author was supported in part by Simons Foundation Grant \#522400.}
\email{\tt andrzej.dudek@wmich.edu}
\begin{document}

\begin{abstract}
We discuss the differential equation method for establishing dynamic concentration of discrete random processes. We present several relatively simple examples of it and aim to make the method understandable to the unfamiliar reader who has some basic knowledge on probabilistic methods, random graphs and differential equations.
\end{abstract}

\maketitle

\section{Introduction}

The {\em differential equation method} in probabilistic combinatorics is a set of techniques and tools which can be used to analyze random processes that evolve one step at a time. Roughly speaking, the steps should be very small relative to the whole evolving structure, e.g.\ one step could consist of adding one edge to a graph with some large number of vertices. If successful, the method gives us tight bounds on a random variable (or a family of them) that hold at every step of the process, asymptotically almost surely.  (Recall that an event $\mathcal{E}_n$ in a probability space holds \emph{asymptotically almost surely}, abbreviated \emph{a.a.s.}, if $\Pr(\mathcal{E}_n)$ tends to~$1$ as $n$ goes to infinity.) If we manage to prove such bounds on a random variable, we say that we have {\em tracked} that variable. 

The phenomenon that we establish using this method is called {\em dynamic concentration} because at any given step the random variable is concentrated around its expectation, but that expectation is also dynamic in the sense that it changes from one step to the next. We call this dynamic expectation of the random variable the {\em trajectory.} 
The differential equation method is so named for the following reason. Since the steps consist of very small changes relative to the whole structure, even though the steps are discrete, we can essentially treat the process as continuous. The change over one step in the random variable's trajectory can then be approximated by a derivative of a function corresponding to the expected value. As we apply this method several times in this paper, we will see that these one-step changes in trajectories lead us to differential equations, and the fact that our trajectories satisfy them will be crucial to the proof.  We will give some history of the method at the end of this section. 

In this paper we aim to introduce the differential equation method through three examples. The method is not necessarily the best way to analyze the chosen examples, but we would like to illustrate the method in a relatively simple setting. Let us emphasize that all examples are well-known results that can even be considered mathematical folklore.

The first example (see Section \ref{sec:balls}) we will use is the balls and bins problem. This is a classic problem in probability theory that has been extensively studied by several researchers (see, e.g.,~\cite{JK1977}). 
The problem involves $n$ distinguishable bins and $m$ balls. At each step a single ball is placed uniformly at random into one of the bins. We are interested in the number of bins with exactly $k$ balls at the end of the process.  The balls and bins problem is relatively easy because at each step all that happens is some bin with say $j$ balls will now have $j+1$ balls. We will track the number of bins with $0\le k\le\kappa$ balls, for some nonnegative integer~$\kappa$. Our analysis will yield the following:
\begin{theorem}\label{thm:balls}
Fix an integer $\kappa\ge 0$  and a real number $\alpha>0$.  Then, a.a.s.\  in the balls and bins problem with $n$ distinguishable bins and $m = m(n) \le (\frac{1}{2}-\alpha)n\log n $ balls the number of bins with exactly ~$\kappa$ balls is~asymptotically equal to 
\[
\frac{(\frac{m}{n})^\kappa e^{-\frac{m}{n}}}{\kappa!}n.
\]
\end{theorem}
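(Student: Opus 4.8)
The plan is to run the differential equation method on the whole family $(Y_k(i))_{0\le k\le\kappa}$, where $Y_k(i)$ is the number of bins containing exactly $k$ balls after $i$ throws and $\mathcal F_i$ is the $\sigma$-algebra generated by the first $i$ throws (so $Y_0(0)=n$ and $Y_k(0)=0$ for $k\ge 1$). The first step is to compute the one-step expected changes. The $(i{+}1)$-st ball lands in a bin currently holding exactly $j$ balls with probability $Y_j(i)/n$, and this event changes $Y_j$ by $-1$ and $Y_{j+1}$ by $+1$; hence, with the convention $Y_{-1}\equiv 0$,
\[
\E\bigl[Y_k(i{+}1)-Y_k(i)\mid\mathcal F_i\bigr]=\frac{Y_{k-1}(i)-Y_k(i)}{n},\qquad 0\le k\le\kappa,
\]
and moreover $|Y_k(i{+}1)-Y_k(i)|\le 1$ always. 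Rescaling time as $t=i/n$ and positing a trajectory $Y_k(i)\approx n\,y_k(i/n)$, the approximation $n\bigl(y_k(\tfrac{i+1}{n})-y_k(\tfrac in)\bigr)\approx y_k'(t)$ turns the display above into the triangular linear system $y_k'=y_{k-1}-y_k$ with $y_{-1}\equiv 0$, subject to $y_0(0)=1$ and $y_k(0)=0$ for $k\ge 1$. Solving it in order gives $y_0=e^{-t}$, then $y_1=t e^{-t}$, then $y_2=\tfrac{t^2}{2}e^{-t}$, and by an easy induction $y_k(t)=\tfrac{t^k e^{-t}}{k!}$ --- the Poisson probability mass function. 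Since the quantity in the theorem is $Y_\kappa(m)$ and $n\,y_\kappa(m/n)=\tfrac{(m/n)^\kappa e^{-m/n}}{\kappa!}\,n$, the entire content of the proof is to show that $Y_\kappa$ tracks $n\,y_\kappa$.

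Set $D_k(i):=Y_k(i)-n\,y_k(i/n)$, so that $D_k(0)=0$ and, combining the one-step expectation with a first-order Taylor expansion (the error being $O(1/n)$ because $y_k''$ is bounded on $[0,\infty)$),
\[
\E\bigl[D_k(i{+}1)-D_k(i)\mid\mathcal F_i\bigr]=\frac{D_{k-1}(i)-D_k(i)}{n}+O\!\left(\tfrac1n\right).
\]
I would then fix error functions $f_k$ of order $\sqrt n$ times a suitable fixed power of $\log n$, with $f_0<f_1<\dots<f_\kappa$ and with each gap $f_k-f_{k-1}$ also of this order (the increasing-in-$k$ choice is precisely what will absorb the coupling to $D_{k-1}$), and aim to prove that a.a.s.\ $|D_k(i)|\le f_k(i)$ for every $k\le\kappa$ and every $i\le m$. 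At the endpoint this gives $|Y_\kappa(m)-n\,y_\kappa(m/n)|\le f_\kappa(m)$, which is $o\bigl(n\,y_\kappa(m/n)\bigr)$ precisely because of the hypothesis $m\le(\tfrac12-\alpha)n\log n$: that hypothesis forces $e^{-m/n}\ge n^{\alpha-1/2}$, so $n\,y_\kappa(m/n)$ is at least of order $n^{1/2+\alpha}$ when $\kappa=0$ (and for general fixed $\kappa$ it likewise dominates the error whenever it tends to infinity, which is the only case in which the assertion has content), comfortably above any $\sqrt n\cdot\mathrm{polylog}(n)$ error.

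To prove $|D_k(i)|\le f_k(i)$ I would introduce a single stopping time $T$ --- the first step at which some $|D_k(i)|$ exceeds $f_k(i)$, or $m$ if this never happens --- and show $\Pr(T<m)=o(1)$ by analysing, for each $k$ and each sign, the excursion of $D_k$ past its boundary. Take the upper side for some $k\ge1$: let $i_0$ be the last step before the hypothetical first violation at which $D_k$ lies below $f_{k-1}+\text{const}$; throughout the ensuing excursion $D_k>f_{k-1}+\text{const}\ge D_{k-1}+\text{const}$ (using that up to time $T$ we still have $|D_{k-1}|\le f_{k-1}$), so by the displayed drift formula $\E[D_k(i{+}1)-D_k(i)\mid\mathcal F_i]<0$ on the whole excursion. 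Hence the accumulated drift is non-positive, and to reach the far boundary $f_k$ the accumulated \emph{martingale} part must exceed the ``room'' $f_k-f_{k-1}-O(1)$, which is of order $\sqrt n\cdot\mathrm{polylog}(n)$; since that martingale has $O(1)$ increments over at most $m\le(\tfrac12-\alpha)n\log n$ steps, the Azuma--Hoeffding inequality bounds the probability of this by $\exp\bigl(-\mathrm{polylog}(n)\bigr)$, and a union bound over the $O(m)$ possible $i_0$, the $O(1)$ values of $k$, and the two signs finishes it (the case $k=0$ is identical, with the restoring term $-D_0(i)/n$ playing the role of the gap, and the lower side is symmetric). The main obstacle --- and the point where the argument departs from off-the-shelf large-deviation bounds --- is calibrating the error function: it must be large enough that the room $\gg\sqrt n\log n$, so that Azuma beats the union bound even though a single excursion can last $\Theta(n\log n)$ steps, yet small enough ($o(n^{1/2+\alpha})$) to be dwarfed by the trajectory; that both can be arranged is exactly what the $\tfrac12-\alpha$ in the hypothesis buys. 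Everything else is routine bookkeeping.
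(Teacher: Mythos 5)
Your proposal is correct, and it captures the essential mechanism of the paper's proof: the paper's Section \ref{sec:bins:rigorous} presents a naive supermartingale argument (using a fixed exponentially-growing error function $\eps(t)=n^{-1/3}e^{3t}$) that only reaches $m\le(\tfrac{1}{12}-\alpha)n\log n$, and the full constant $\tfrac12-\alpha$ requires the \emph{self-correction} idea of Section \ref{sec:bins:self}, which is exactly what your excursion argument implements. Where the paper introduces a critical interval $I(t)=[(x_k(t)+\delta(t))n,\,(x_k(t)+\eps(t))n]$ and doubly-indexed frozen martingales $X_k^+(i,j)$, you track the same excursion by taking $i_0$ as the last step at which $D_k$ was below the lower threshold and applying Azuma only to the martingale part of the excursion, with a union bound over the $O(m)$ candidate starting points $j=i_0$; these are the same estimate in different notation. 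The one presentational twist you add is to let the error bounds $f_0<\dots<f_\kappa$ grow in $k$ and to place the critical threshold for $D_k$ at $f_{k-1}+O(1)$, so that the negative drift comes directly from $D_k>D_{k-1}+O(1)$. The paper instead uses a single $\eps(t)$ for every $k$ and a separate $\delta(t)$ with a fixed gap $\eps-\delta$, getting the same cancellation $-\delta+\eps-\eps'\le 0$; your choice is arguably a bit more transparent about \emph{why} the drift is favourable, while the paper's keeps the bookkeeping uniform across $k$. Both parametrizations give a gap of order $\sqrt{n}\cdot\mathrm{polylog}(n)$, both beat the union bound over $j$, and both are dominated by the trajectory $ny_\kappa(m/n)=\Theta(n^{1/2+\alpha}\log^\kappa n)$, so your calibration discussion is spot on.
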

\noindent
 To be clear, when we say that a random variable $X$ is a.a.s.\  asymptotically equal to $x$, we mean that there is some sequence $\eps_n=o(1)$ such that
\[
\lim_{n\rightarrow \infty} \Pr[ (1-\eps_n)x \le X \le (1+\eps_n)x ] =1.
\]

We now describe the second example (see Section \ref{sec:comps}). Let $\G(n,m)$ be the Erd\H{o}s-R\'enyi random graph model that assigns equal probability to all labeled graphs with exactly $m=\lfloor cn\rfloor$ edges, for a positive constant $c$. (For a general introduction to random graphs, see, e.g., \cites{Bollobas2001, JLR2000, FK2016}.)
Our goal is to track the number of components of order~$k$ in $\G(n,m)$ for any $1\le k\le \kappa$, where $\kappa$ is a positive constant. Instead of working with $\G(n,m)$ we will consider an equivalent Erd\H{o}s-R\'enyi process that starts with $n$ vertices and no edges, and at each step adds one new edge chosen uniformly from the set of missing edges. This process is a little bit more complicated than balls and bins, since in one step a component on $j$ vertices can increase its order to $j'$ vertices for any $j'>j$. Roughly speaking, our components of any given order interact with components of all other orders. This is in contrast with balls and bins where the bins with $j$ balls only interact directly with the bins with $j-1$ or $j+1$ balls. However, the analysis of our second example will be only somewhat harder than balls and bins and we will obtain the following: 
\begin{theorem}\label{thm:comps}
Fix an integer $\kappa\ge 1$ and a real number $c>0$. Then,  a.a.s.~the number of components of order~$1\le k\le  \kappa$ in the random graph $\G(n,\lfloor cn\rfloor)$ is ~asymptotically equal to  
\[
\frac{k^{k-2}}{k!} (2c)^{k-1} e^{-2kc}n.
\]
\end{theorem}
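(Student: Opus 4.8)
The plan is to follow the template set by the balls-and-bins argument. Let $Y_k = Y_k(m)$ denote the number of components of order $k$ after $m$ edges of the Erd\H{o}s-R\'enyi edge process have been added, let $\cF_m$ be the $\sigma$-field generated by the first $m$ edges, and set $N := \binom n2 - m$. I would track $Y_1,\dots,Y_\kappa$ simultaneously. Writing $s = m/n$ for the rescaled time and
\[
y_k(s) := \frac{k^{k-2}}{k!}(2s)^{k-1}e^{-2ks},
\]
the claimed value at $m=\lfloor cn\rfloor$ equals $y_k(c)\,n$, so the goal is the stronger statement that a.a.s.\ $\abs{Y_k(m) - y_k(m/n)\,n}\le\operatorname{err}(m)$ for every $0\le m\le\lfloor cn\rfloor$ and every $1\le k\le\kappa$, where $\operatorname{err}(m)=o(n)$ is an error function to be chosen; since $y_k(c)\,n=\Theta(n)$, this gives the theorem.

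First I would carry out the one-step analysis. The new edge is uniform among the $N$ non-edges; it changes no $Y_i$ if its two endpoints lie in the same component, and if they lie in distinct components of orders $i$ and $j$ then those two components vanish and one of order $i+j$ appears. Counting non-edges by their effect, the conditional expected number of order-$k$ components destroyed in one step is $k(n-k)Y_k/N$, and the conditional expected number created is $\tfrac{1}{2N}\sum_{i=1}^{k-1}i(k-i)Y_iY_{k-i}+O(1/n)$ (the $O(1/n)$ coming from the $i=k/2$ term), so
\[
\E\!\left[\,Y_k(m+1)-Y_k(m)\mid\cF_m\,\right] = \frac{-k(n-k)Y_k+\tfrac12\sum_{i=1}^{k-1}i(k-i)Y_iY_{k-i}}{N}+O(1/n).
\]
Since $m=O(n)$ we have $N=(1+o(1))n^2/2$ and $n-k=(1+o(1))n$; feeding in the heuristic $Y_i\approx y_i(m/n)\,n$ and noting that one step advances $s$ by $1/n$, this is consistent with the differential-equation system
\[
y_k' = -2k\,y_k + \sum_{i=1}^{k-1}i(k-i)\,y_i\,y_{k-i}, \qquad y_1(0)=1,\quad y_k(0)=0\ \ (k\ge2),
\]
where $'=d/ds$.

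Next I would verify that the functions $y_k$ above solve this system. Substituting and cancelling the common factor $(2s)^{k-2}e^{-2ks}$, the terms proportional to $s$ cancel automatically and the constant terms cancel precisely when
\[
\sum_{i=1}^{k-1}\binom{k}{i}\,i^{i-1}(k-i)^{k-i-1} = 2(k-1)\,k^{k-2}.
\]
This is a classical identity: its left side counts the ordered pairs formed by a rooted labeled tree on $A$ and a rooted labeled tree on $[k]\setminus A$ as $A$ ranges over the nonempty proper subsets of $[k]$, hence it equals twice the number $(k-1)k^{k-2}$ of rooted forests on $[k]$ with exactly two trees (a consequence of the generalized Cayley formula); it is also a special case of Abel's binomial identity. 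Alternatively one can avoid guessing the $y_k$ and instead solve the system through its generating function, which satisfies a first-order PDE expressible via the tree function $T=xe^{T}$; the verification route is shorter.

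Finally I would establish concentration by the usual supermartingale argument. Let $\tau$ be the first step at which some $Y_i$ ($i\le\kappa$) leaves its window $y_i(m/n)\,n\pm\operatorname{err}(m)$. While $m<\tau$, the displayed conditional expectation differs from $y_k'(m/n)$ by only $O(\operatorname{err}(m)/n)$ (the dominant contribution being the replacement of $Y_iY_{k-i}/n^2$ by $y_iy_{k-i}$), and one always has $\abs{Y_k(m+1)-Y_k(m)}\le2$ and $\abs{y_k((m+1)/n)\,n-y_k(m/n)\,n}=O(1)$. Taking $\operatorname{err}(m)=C_1\sqrt{n\log n}\,e^{C_2 m/n}$ with $C_2=C_2(\kappa)$ large makes the per-step growth of $\operatorname{err}$ absorb the $O(\operatorname{err}(m)/n)$ drift --- a Gr\"onwall-type choice --- so that, up to time $\tau$, $Y_k(m)-y_k(m/n)\,n-\tfrac12\operatorname{err}(m)$ is a supermartingale and $Y_k(m)-y_k(m/n)\,n+\tfrac12\operatorname{err}(m)$ a submartingale, once $C_1$ is large enough that $\tfrac12\operatorname{err}(0)$ dominates the $O(\sqrt{n\log n})$ martingale fluctuations. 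Azuma--Hoeffding then bounds by $n^{-3}$ the probability that $Y_k$ ever leaves its window before time $\tau$, and a union bound over $k\le\kappa$ yields $\tau>\lfloor cn\rfloor$ a.a.s.; since $\operatorname{err}(\lfloor cn\rfloor)=C_1\sqrt{n\log n}\,e^{C_2 c}=o(n)$ (as $c$ is constant), the theorem follows. I expect the main obstacle to be managing the coupling among the $\kappa$ tracked variables: the drift of $Y_k$ involves $Y_1,\dots,Y_{k-1}$, so their errors feed into one another and the whole argument must be run simultaneously for all $k\le\kappa$ with a single, suitably growing error function. Pinning down the Abel-type identity is the other delicate point; the concentration bookkeeping is routine once the one-step changes are seen to be $O(1)$.
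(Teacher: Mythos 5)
Your proposal follows essentially the same route as the paper: the Erd\H{o}s--R\'enyi edge process, the same expected one-step drift (your $k(n-k)Y_k$ is just the simplification of the paper's $kY_k(n-kY_k)+2\binom{Y_k}{2}k^2$), the same ODE system and trajectory, the same Abel-type identity checked combinatorially (the paper counts labelled trees on $[k]$ by deleting one of the $k-1$ edges, so each tree is obtained $2(k-1)$ times, whereas you count rooted two-tree forests --- equivalent arguments), and the same supermartingale/Azuma--Hoeffding machinery with an exponentially growing error envelope whose rate is tuned to $\kappa$ in Gr\"onwall fashion. Your error function $C_1\sqrt{n\log n}\,e^{C_2 m/n}$ is marginally tighter than the paper's $n^{2/3}e^{6\kappa^3 t}$, but this is a cosmetic difference; the structure, key lemma, and concentration argument are the same.
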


We will then do a third example (see Section \ref{sec:match}), which will imply a nontrivial result in extremal graph theory. This will show how certain processes can produce a  structure with interesting extremal properties. Applications of this method have yielded several best known bounds for problems in extremal combinatorics that seem purely deterministic (which is of course one of the most compelling reasons to study probabilistic combinatorics in the first place). Here, we will prove the following:

\begin{theorem}\label{thm:match}
Let $d=d_n$ be a sequence such that $\frac{d}{\log n}\to\infty$. Let $G_n$ be a $d$-regular graph of order $n$. Then the size of a maximum matching in $G_n$ is $\left( \frac12 - o(1) \right) n$.
\end{theorem}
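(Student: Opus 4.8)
The plan is to analyze the following random greedy matching process on $G_n$: starting from $M=\emptyset$ and the whole edge set, repeatedly pick an edge uniformly at random among all edges both of whose endpoints are currently unsaturated, add it to $M$, and delete it (and all edges meeting its endpoints) from consideration. This process terminates with a maximal (hence not necessarily maximum, but that is fine for a lower bound) matching. We will track the number of unsaturated vertices, or equivalently the number of edges remaining. Since a maximum matching has size at least $|M|$, it suffices to show that a.a.s.\ the greedy process saturates all but $o(n)$ vertices, i.e.\ $|M| \ge (\tfrac12 - o(1))n$.

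First I would set up the key random variable. Let $Y(i)$ be the number of unsaturated vertices after $i$ steps, so $Y(0)=n$ and $Y(i+1) = Y(i)-2$ deterministically as long as the process has not stopped. The process stops at step $i$ only if no edge survives among the $Y(i)$ unsaturated vertices; the natural quantity to control is therefore $Q(i)$, the number of \emph{surviving} edges (edges with both endpoints unsaturated), and we want to show $Q(i)>0$ for as long as possible. The heart of the matter is a one-step analysis: when we add a random edge $uv$, we remove $u,v$ and we remove every surviving edge incident to $u$ or $v$; in a $d$-regular graph each of $u,v$ has degree $d$, but many of those incident edges may already be dead, so the expected number of surviving edges destroyed in one step is roughly $1 + 2\cdot(\text{average surviving degree of a random endpoint})$. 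Under the heuristic that the surviving graph stays roughly ``regular-like'', if $Y(i) = y$ unsaturated vertices remain then the surviving edge count should be about $\tfrac12 y \cdot d_{\mathrm{surv}}$ where $d_{\mathrm{surv}}$ scales like $d\cdot(y/n)$; writing $y = tn$ for $t\in(0,1]$ this suggests a trajectory $q(t) \approx \tfrac12 d t^2 n$ for $Q$, which stays positive all the way down to $t$ of order roughly $\sqrt{1/d}$, i.e.\ until only $O(n/\sqrt d) = o(n)$ vertices remain.

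The technical steps, in order, are: (1) rescale time by $s = i/n$ and guess the trajectory $q(s)$ for $Q$ from the expected one-step change $\mathbb{E}[Q(i+1)-Q(i)\mid \mathcal F_i]$, deriving the corresponding differential equation; (2) define a stopping time at which either $Q$ escapes its target interval $(1\pm\eps)q(s)n$ (for a slowly shrinking $\eps = \eps_n$), or some auxiliary ``regularity'' quantity misbehaves, or too few vertices remain; (3) show that $Q(i)$ minus its trajectory is a supermartingale/submartingale up to that stopping time with one-step changes bounded by $O(d)$, and apply the Azuma--Hoeffding or Freedman inequality (as used in the earlier examples) to show $Q$ stays dynamically concentrated a.a.s.; (4) conclude that a.a.s.\ the process runs until $Y(i) = o(n)$, so $|M| = (n - Y)/2 = (\tfrac12 - o(1))n$, and since any maximum matching has size between $|M|$ and $n/2$, the theorem follows. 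The condition $d/\log n \to \infty$ enters because the one-step changes are of size $O(d)$ over $\Theta(n)$ steps, so Azuma gives deviation probability $\exp(-\Omega(\eps^2 n / d))$, which is $o(1)$ precisely when $d = o(n)$ with room to spare — and more importantly the error term $\eps_n$ in concentration must beat the $O(1/\sqrt d)$ size of the leftover, so we need $d \to \infty$ faster than $\log n$ to close the argument cleanly.

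The main obstacle I anticipate is controlling the ``surviving degrees'': the clean trajectory $q(s) = \tfrac12 d(1-2s)^2 n$ (or whatever form the ODE gives) relies on the surviving subgraph remaining quasi-regular, but a priori the surviving degrees of different unsaturated vertices could drift apart, breaking the estimate for the expected one-step decrease of $Q$. The honest fix is either (a) to track, in addition to $Q$, the maximum surviving degree $\Delta_{\mathrm{surv}}(i)$ and show it stays close to its trajectory $d(1-2s)$ by the same differential-equation machinery — a genuinely coupled system, since $Q$ appears in the expected change of $\Delta_{\mathrm{surv}}$ and vice versa — or (b) to observe that a crude upper bound $\Delta_{\mathrm{surv}}(i)\le d$ together with a lower bound on $Q$ already suffices to guarantee the process does not stop prematurely, trading a tight trajectory for a robust but weaker estimate that still yields the $(\tfrac12-o(1))n$ conclusion. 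Given the paper's ``gentle introduction'' aim, I would present route (b): use only that each step kills at most $2d-1$ surviving edges, track the simple lower trajectory for $Q$, and show $Q>0$ until $o(n)$ vertices remain — avoiding the coupled system while still illustrating dynamic concentration.
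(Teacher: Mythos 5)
Your overall architecture matches the paper: analyze the random greedy matching process, rescale time, guess a trajectory, show dynamic concentration, and conclude that the process runs until only $o(n)$ vertices remain. Your heuristic trajectory $q(t)n \approx \tfrac12 d(1-2t)^2 n$ for the number of surviving edges is also correct. But the route you actually commit to presenting --- your ``route (b)'' --- has a concrete quantitative gap that kills the bound.

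\textbf{The gap in route (b).} If all you use is $\Delta_{\mathrm{surv}}(i)\le d$, then each step removes at most $2d-1$ surviving edges, so the best lower trajectory you can prove is the linear one $Q(i)\ge \tfrac{nd}{2}-(2d-1)i$, which becomes nonpositive around $i\approx n/4$. This only certifies that the process survives $\approx n/4$ steps, giving a matching of size $\approx n/4$, \emph{not} $(\tfrac12-o(1))n$. The quadratic trajectory $\tfrac12 d(1-2t)^2 n$, which stays positive all the way to $t\to 1/2$, is exactly what one needs, and it is not recoverable from the static bound $\Delta\le d$: in the expected one-step change $\E[\Delta Q\mid\mathcal F_i]=1-\sum_v D_v^2/Q$, bounding $\sum_v D_v^2\le d\sum_v D_v=2dQ$ gives $\E[\Delta Q]\ge 1-2d$, whereas you need the improving bound $\sum_v D_v^2 \le (dp(t)+o(d))\cdot 2Q$, i.e.\ an \emph{evolving} upper bound on the degrees. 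So route (b) is not a weaker-but-sufficient shortcut; it is strictly too weak for the stated theorem.

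\textbf{How the paper closes the gap.} Rather than tracking $Q$ and $\Delta_{\mathrm{surv}}$ as a coupled pair, the paper tracks the full family $\{D_v(i)\}_{v\in V}$ (the number of unmatched neighbors of each $v$), and shows a.a.s.\ that $D_v(i)=dp(t)\pm\eps(t)$ for \emph{every} vertex simultaneously, with $p(t)=1-2t$ and $\eps(t)=s\,p(t)^{-4}$ where $s=K\sqrt{d\log n}$. This yields both the lower bound ($D_v>0$, so the process keeps running) and the upper bound needed to control the one-step decrease. Two points worth noting, since they are exactly what your sketch does not address: (i) the family $\{D_v\}$ forms a closed system --- the expected one-step change of $D_v$ can be written in terms of $\{D_u\}$ alone --- whereas $(Q,\Delta_{\mathrm{surv}})$ does not, since $\E[\Delta Q]$ involves $\sum_v D_v^2$; (ii) the union bound over $n$ vertices forces a failure probability of $o(1/n)$, which Azuma--Hoeffding gives only if $d\gg\sqrt n$. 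The paper instead uses Freedman's inequality, exploiting that the one-step \emph{variance} of $D_v$ is only $O(d/n)$ (most steps don't touch $N(v)$ at all), and this is precisely where the hypothesis $d\gg\log n$ enters. Your estimate that Azuma gives deviation probability $\exp(-\Omega(\eps^2 n/d))$ applied to $Q$ is not the relevant computation; the bottleneck is the per-vertex concentration.

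So: the idea is right, but you should present a version of your route (a) --- with the full degree family rather than $(Q,\Delta_{\mathrm{surv}})$ --- and you should replace Azuma by Freedman to get the result for all $d\gg\log n$ rather than just $d\gg\sqrt n$.
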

\noindent
In other words, $G_n$ has a matching that is almost perfect. (Here, obviously, we assume that $dn$ is even. So when $d$ is odd we take a limit over even values of $n$.)

The result will follow from our analysis of a random greedy matching process. Specifically, at each step in the process we choose one edge to be in a matching, chosen uniformly at random from all edges that are not incident with any edges already in the matching. The process stops when it is no longer possible to choose any more such edges. 

We now discuss the history of the method. Before the differential equation method came into use by combinatorialists, probabilists knew that certain random processes were concentrated around deterministic functions which could be described by differential equations (see, e.g.\ Kurtz \cite{Kurtz1970}). However, the first result in graph theory whose proof used this phenomenon was by Karp and Sipser \cite{KS1981}, who analyzed a random greedy matching process (more complicated than the one in the above paragraph) when run on a random graph. Their analysis implied that their (fast) algorithm a.a.s.\  outputs a maximum matching in $\G(n, m)$ for certain $m$ in the sparse linear regime. While Karp and Sipser's proof did not look much like the applications of the differential equation method we see today, it did crucially rely on establishing sharp estimates of random variables that change over time, whose trajectory can be given by the solution of a differential equation.  

Ruci\'nski and Wormald \cite{RW1992} were probably the first to prove a result in combinatorics using a method resembling what we present in this paper. They analyzed the \emph{$d$-process}, which is a \emph{constrained random graph process}. It starts with an empty graph and at each step adds one edge chosen uniformly at random from all edges in such a way that the maximum degree of the induced graph is always bounded above by~$d$. In particular, they obtained bounds on the probability of unlikely events using McDiarmid's inequality, a concentration inequality that resembles the ones we will use (Azuma-Hoeffding's inequality and Freedman's inequality). 

For a broad survey of the relatively early applications of the method, see Wormald~\cite{W1999}. There you will also find Wormald's ``black box'' differential equation method theorem, which asserts that if a family of random variables satisfies a list of conditions then they are all dynamically concentrated around their trajectories. (Recently Warnke \cite{Warnke2020} gave a stronger version of Wormald's  theorem.) The method owes much of its early development, particularly in the 1990s, to Wormald. His black box theorem does not apply to every interesting process, but it does apply to many (it can apply to our first two examples in this paper, and see \cite{DM2010, W1999, BBal2016} for several more examples). Wormald's survey also contains several examples where his black box theorem does not apply but he is nevertheless able to analyze them. For more of Wormald's contributions see also~\cite{W1995, W2003}.

It is also worth mentioning that the differential equation method can be also viewed as a more precise version of the R\"odl nibble~\cite{Rodl1985}. Whereas a typical application of the differential equation method considers a process that progresses one small random step at a time, the nibble method considers a {\em semi-random} process, which means a process that progresses by iteratively choosing many possible random steps to take and then taking some subset of those steps. This approach was first successfully used by R\"odl~\cite{Rodl1985}, who proved the Erd\H{o}s and Hanani conjecture. Another spectacular application of this technique was given by Kim~\cite{Kim1995}, who found the right order of magnitude of the Ramsey number $R(3,t)$. For a more recent application of the nibble technique see Guo and Warnke's paper on packing $R(3, t)$ graphs~\cite{GW2020}.

Since the late 2000s, Bohman and others have made several important contributions to the method, including the application of concentration inequalities that had previously been unused in the differential equation method. Bohman \cite{B2009} and subsequently Bohman and Keevash \cite{BK2010, BK2013, BK2020} improved several best-known bounds in extremal graph theory, including several Ramsey and Tur\'an bounds. Perhaps the most well known of these results is that the Ramsey number $R(3, t) \ge (1/4 + o(1)) t^{3/2} / \log t$, which was proved independently by Fiz Pontiveros, Griffiths and Morris \cite{FGM2020} and Bohman and Keevash \cite{BK2013, BK2020}. Both proofs are analyses of the triangle-free process (see \cite{B2009} for a more gentle treatment of this process). This is also an example of a constrained random graph process, more specifically it adds one edge at a time chosen uniformly at random from all edges whose addition to the current graph would maintain that it is triangle-free. The process halts with a maximal triangle-free graph, and the very detailed analyses in \cite{BK2013, BK2020, FGM2020} yield a bound on the independence number on the final graph, implying the $R(3, t)$ bound. Roughly speaking, they are able to apply the differential equation method to track the upper bound on the independence number of the evolving graph.
This result is a major breakthrough that required several new innovations, possibly the most important of which is that they tracked a much larger family of random variables than what Bohman originally considered in~\cite{B2009}. This larger family of variables was carefully chosen to give the information needed to get such a good bound on the independence number of the graph. 

Finally, let us emphasize that there are other introductory papers describing the differential equation method. Without any doubt the most fundamental is the aforementioned survey by Wormald~\cite{W1999} (and also \cite{W1995, W2003}). 
Readers who are more interested in the computer science aspects of the differential equation method can find a useful survey of D\'{i}az and Mitsche~\cite{DM2010}.

\bigskip
\textbf{Acknowledgement.} We are very grateful to all our colleagues for helpful discussions and useful comments. These include Bethany Austhof, Nick Christo, Sean English, Alan Frieze, Andrzej Ruci\'nski, Lutz Warnke and Nick Wormald. We would also like to thank the anonymous referees for many helpful comments and suggestions. 


\section{Preliminaries}
In this section we present a few tools that we are going to use in the rest of the paper. We start with the well known Taylor's theorem from Calculus.
As it will be explained in the next sections, tracking random variables will require to approximate them by continuous functions. Therefore, Taylor's theorem will be very handy.
\begin{taylor}
Let $f:\mathbb{R}\to\mathbb{R}$ be a function twice differentiable on the closed interval $[a,b]$. Then, there exists a number $\tau$ between $a$ and $b$ such that 
\begin{equation}\label{eq:taylor}
f(b) = f(a) + f'(a)(b-a) + \frac{f''(\tau)}{2} (b-a)^2.
\end{equation}
\end{taylor}

In order to estimate a failure probability we will make a use of Azuma-Hoeffding's inequality. For that we need to define supermartingales. Consider a random process (a sequence of random structures) such as a sequence of graphs or configurations of balls in bins. Let $Y_0,Y_1,Y_2,\dots,Y_i,\dots$ be random variables counting something in the $i^{th}$ structure. Let $\mc{F}_i$ be the history of the process up to step~$i$. More formally $\mc{F}_i$ is a partition of the underlying probability space (the set of all possible sequences of structures) where two sequences are in the same part if they agree on the first~$i$ structures.  In particular, conditioning on $\mc{F}_i$ tells us the current structure and the value of~$Y_i$. For convenience, we also define a \emph{difference operator} $\D$ as \[
\D Y_i = Y_{i+1}-Y_i.
\]
We say that the sequence $Y_0,Y_1\ldots$ is a \emph{supermartingale} if for each $i\ge 0$, $\Mean{\D Y_{i} | \mc{F}_{i}}\le 0$.
Similarly, we define a \emph{submartingale} if the sequence satisfies, $\Mean{\D Y_{i} | \mc{F}_{i}}\ge 0$.


\begin{azuma}[\cites{A1967, H1963}]
If $Y_0,Y_1,\dots$ is a supermartingale and $|Y_j - Y_{j-1}|\le C$ almost surely for all $j\ge 1$, then for all positive integers $m$ and all positive reals~$\lambda$,
\begin{equation}\label{eq:azuma}
\Pr(Y_m - Y_0 \ge \lambda) \le \exp\left( -\frac{\lambda^2}{2C^2 m} \right).
\end{equation}
\end{azuma}
When we will apply Azuma-Hoeffding's inequality we  first define two sequences $Y_0^\pm,Y_1^\pm,\dots$, where $Y_i^+$ is a supermartingale and $Y_i^-$  is a submartingale. Next apply~\eqref{eq:azuma} to $Y_i^+$ with $\lambda = -Y_0^+$, where $Y_0^+<0$. This will imply that 
\[
\Pr(Y_m^+\ge 0) = \Pr(Y_m^+ - Y_0^+ \ge \lambda) \le \exp\left( -\frac{\lambda^2}{2C^2 m} \right).
\]
Consequently, for $\lambda^2 \gg C^2 m$, we get that a.a.s.\  $Y_m^+ < 0$. Similarly, since $-Y_i^-$ is also a supermartingale, \eqref{eq:azuma} will yield that a.a.s.\  $Y_m^- >0$.

As we will see in Section~\ref{sec:match} sometimes~\eqref{eq:azuma} is not strong enough. For variables, which experience relatively large but rare one-step changes, the following deviation inequality is often helpful.
\begin{freedman}[\cite{F1975}]
Suppose $Y_0,Y_1,\dots$ is a supermartingale such that $Y_j - Y_{j-1} \le C$ for all $j$, and let $V_m =\displaystyle \sum_{k \le m} \Var[ \Delta Y_k| \mathcal{F}_{k}]$.  Then, for all positive reals~$\lambda$,
\begin{equation}\label{eq:freedman}
\Pr(\exists m: V_m \le b \text{ and } Y_m - Y_0 \geq \lambda) \leq \displaystyle \exp\left(-\frac{\lambda^2}{2(b+C\lambda) }\right).
\end{equation}
\end{freedman}
Freedman's inequality still has the parameter $C$, which is now only an upper bound on the one-step change (and not its absolute value). 
So what do we gain? The benefit is that Freedman's inequality has a dependence on $C$ that is not as bad as in Azuma-Hoeffding's inequality. What really matters for Freedman's inequality is the one-step variance. Roughly speaking, the one-step variance in Freedman plays the same role as $C^2$ in Azuma-Hoeffding's inequality. For the type of variable we would like to track, the one-step variance is much smaller than $C^2$ because a change as large as $C$ is a rare event. 

Throughout the paper, we will use standard big-O and little-o notation, and all asymptotics are as $n \rightarrow \infty.$ We write $a(n) \sim b(n)$ if $a(n)=(1+o(1))b(n)$ and $a(n) \gg b(n)$ if $b(n) = o(a(n))$.
We also write $a(n) = \Omega(b(n))$ if there exists a (possibly small) positive constant $\gamma$ such that $a(n) \ge \gamma b(n)$.

We abuse notation by ignoring issues of integrality (say we are omitting integer floors). All logarithms are natural, that is to base $e$.

\section{Balls and bins}\label{sec:balls}

%

In this section we prove Theorem~\ref{thm:balls}. Recall that we have $n$ distinguishable bins all starting out empty, and at each step $i$ we place one ball into a uniformly random bin (independently from other balls). Let $X_k(i)$ be the number of bins with exactly $k$ balls at step~$i$, where $0\le k\le \kappa$ and $0\le i\le m$. In other words,  we count the number of bins with $k$ balls after $i$ balls were placed. We will show that this random variable $X_k(i)$ is highly concentrated, and hence $X_k(i)$ follows some trajectory. 

In Section~\ref{sec:bins:heuristic} we show how to guess the anticipated trajectory of $X_k(i)$. In Section~\ref{sec:bins:rigorous} we provide a rigorous argument working under a slightly weaker assumption that the total number of balls $m=m(n)$ is at most $\left(\frac{1}{12}-\alpha\right)n\log n$, where $\alpha>0$. Finally, in Section~\ref{sec:bins:self} we improve the upper bound on $m$ to $m\le \left(\frac{1}{2}-\alpha\right)n\log n$.

\subsection{Determining trajectories heuristically}\label{sec:bins:heuristic}
For this simple example, there are several ways to heuristically derive the trajectories. The easiest way is to directly estimate the expected value of $X_k(i)$ after $i$ balls are placed. Observe that the number of balls in a fixed bin is binomially distributed and hence the probability that a fixed bin has exactly $k$ balls is
\[
\binom{i}{k} \rbrac{ \frac 1n}^k \rbrac{1-\frac 1n}^{i-k}.
\]
Thus, the expected number of bins with $k$ balls is
\[
n\binom{i}{k} \rbrac{ \frac 1n}^k \rbrac{1-\frac 1n}^{i-k}.
\]
Since $1\ll i$ and $k$ is a fixed number, the last factor is asymptotically equal to $\rbrac{1-\frac 1n}^{i}$. Furthermore, since $i = o(n^2)$, the expansion of the $\log$ function implies that $\rbrac{1-\frac 1n}^{i} \sim e^{-\frac{i}{n}}$. Finally, we obtain that the expected number of bins with $k$ balls is asymptotically equal to
\[
\frac{\left(\frac{i}{n}\right)^k e^{-\frac{i}{n}}}{k!}n.
\]

However, often in applications of the differential equation method it is not so easy to derive the trajectories. So we will show another way that is more representative of harder examples. We will calculate the one-step changes in our variables, interpret those as a system of differential equations, and then solve that system to derive the trajectories. 

Let $\mc{F}_i$ be the history of the process up to step $i$. 
We calculate the expected one-step change of $X_k(i)$, conditional  on 
 $\mc{F}_{i}$, namely, 
\[
\Mean{\D X_k(i) | \mc{F}_{i}} = \Mean{X_k(i+1)-X_k(i) | \mc{F}_{i}}.
\]
Assume that we have already placed $i$ balls. From now on we will often suppress the $i$ in $X_k(i)$, unless we want to emphasize it or the argument is something other than $i$. Now we place a new ball uniformly at random to one of the $n$ bins. If it is placed into one of the bins with $k$ balls (which happens with probability $X_k/n$), then the change in $X_k$ will be $-1$.  However, provided that $k\ge 1$, if it goes into one of the bins with $k-1$ balls  (which happens with probability $X_{k-1}/n$), then the change in $X_k$ will be $1$. Thus,
\begin{equation}\label{eq:X_k}
\Mean{\D X_k(i) | \mc{F}_{i}} =  -\frac{X_k}{n} + \frac{X_{k-1}}{n},
\end{equation}
where, for convenience, we define $X_{-1}= 0$.

Assume heuristically that
\[
X_k(i) \sim nx_k(i/n)
\] 
for some deterministic function $x_k$. Observe that we must have $x_0(0)=1$ and $x_k(0)=0$ for $k\ge 1$, since $X_0(0)=n$ and $X_k(0)=0$ for $k\ge 1$.

Now we introduce a time parameter
\[
t = t_i: = \frac{i}{n}.
\] 
We will often suppress the $i$ in $t_i$ unless we want to emphasize it or the subscript is something other than $i$. We think of $t$ as a \emph{continuous} time parameter (e.g.\ we take derivatives with respect to $t$).

We estimate $\Mean{\D X_k(i) | \mc{F}_{i}}$ by using Taylor's theorem (c.f.~\eqref{eq:taylor}). We heuristically assume  that $x_k(t)$ is twice differentiable. Thus,
\[
X_k(i+1) - X_k(i) \sim n\big(x_k(t_{i+1}) - x_k(t_i)\big) = n\left(x_k'(t_{i})\cdot \D t + \frac{x_k''(\t)}{2} (\D t)^2\right), 
\]
where $\D t = t_{i+1}-t_i = 1/n$ and $\t$ is some number in $[t_i, t_{i+1}]$. If we assume that $x_k''(\t) = o\big{(}n x_k'(t_i)\big{)}$, then 
\[
\Mean{\D X_k(i) | \mc{F}_{i}} \sim n\rbrac{\frac{x_k'(t)}{n} + \frac{x_k''(\t)}{n^2}} \sim x_k'(t).
\]
This happens all the time in the differential equation method: we approximate the expected one-step change in a random variable by the derivative of its trajectory (sometimes multiplied by an appropriate scaling factor).  
Now looking back to \eqref{eq:X_k}, we see that the left-hand side is approximately $x_k'$ and the right-hand side is 
\[
-\frac{X_k}{n} + \frac{X_{k-1}}{n} \sim -x_k + x_{k-1},
\]
and thus we have derived the following system of differential equations:
\[
x_k' = -x_k + x_{k-1}
\]
with $0\le k\le \kappa$, $x_{-1}:=0$ and initial conditions $x_0(0)=1$ and $x_k(0)=0$ for $k\ge 1$. 
It is easy to see that the unique solution to this system is given by 
\[
x_k(t) = \frac{t^k e^{-t}}{k!}.
\]
Indeed, for $k=0$ we get $x_0' = -x_0$ with $x_0(0)=1$ implying $x_0 = e^{-t}$. For $k\ge 1$ we solve the system iteratively. It suffices to observe that knowing $x_{k-1} = \frac{t^{k-1} e^{-t}}{(k-1)!}$ and $x_k(0)=0$ yield $x_k = \frac{t^k e^{-t}}{k!}$ as the unique solution of the first-order linear differential equation \linebreak $x_k' = -x_k + \frac{t^{k-1} e^{-t}}{(k-1)!}$.


\subsection{Rigorous argument}\label{sec:bins:rigorous}
Recall that our goal is to show that a.a.s.\  $X_k(i) \sim nx_k(t)$. But in light of equation \eqref{eq:X_k}, we cannot track $X_{k}$ unless we also track $X_{k-1}$, which forces the tracking of $X_{k-2}(i)$, etc. This illustrates a common issue in the method: our family of tracked variables must form a {\em closed system} in the sense that we must be able to write (at least approximately) the expected one-step change of any of our variables in terms of other variables in our tracked family.  Thus, we need to show that a.a.s.\  for each $0\le k\le \kappa$,
\begin{equation}\label{eq:bb_goal}
n\big(x_k(t) - \eps(t)\big) \le X_k(i)\le n\big(x_k(t) + \eps(t)\big)
\end{equation}
for some error function $\eps(t)$ satisfying $x_k(t) \gg \eps(t)$. (For simplicity we will choose the same error function for each $k$.)
In Figure \ref{fig:opt}(\textsc{\subref{subfig:1}}) we illustrate how $X_1(i)$ stays within a narrow envelope around its trajectory.

\begin{figure}
\centering
\begin{subfigure}[b]{0.4\textwidth}
                \centering
                \includegraphics[width=\textwidth]{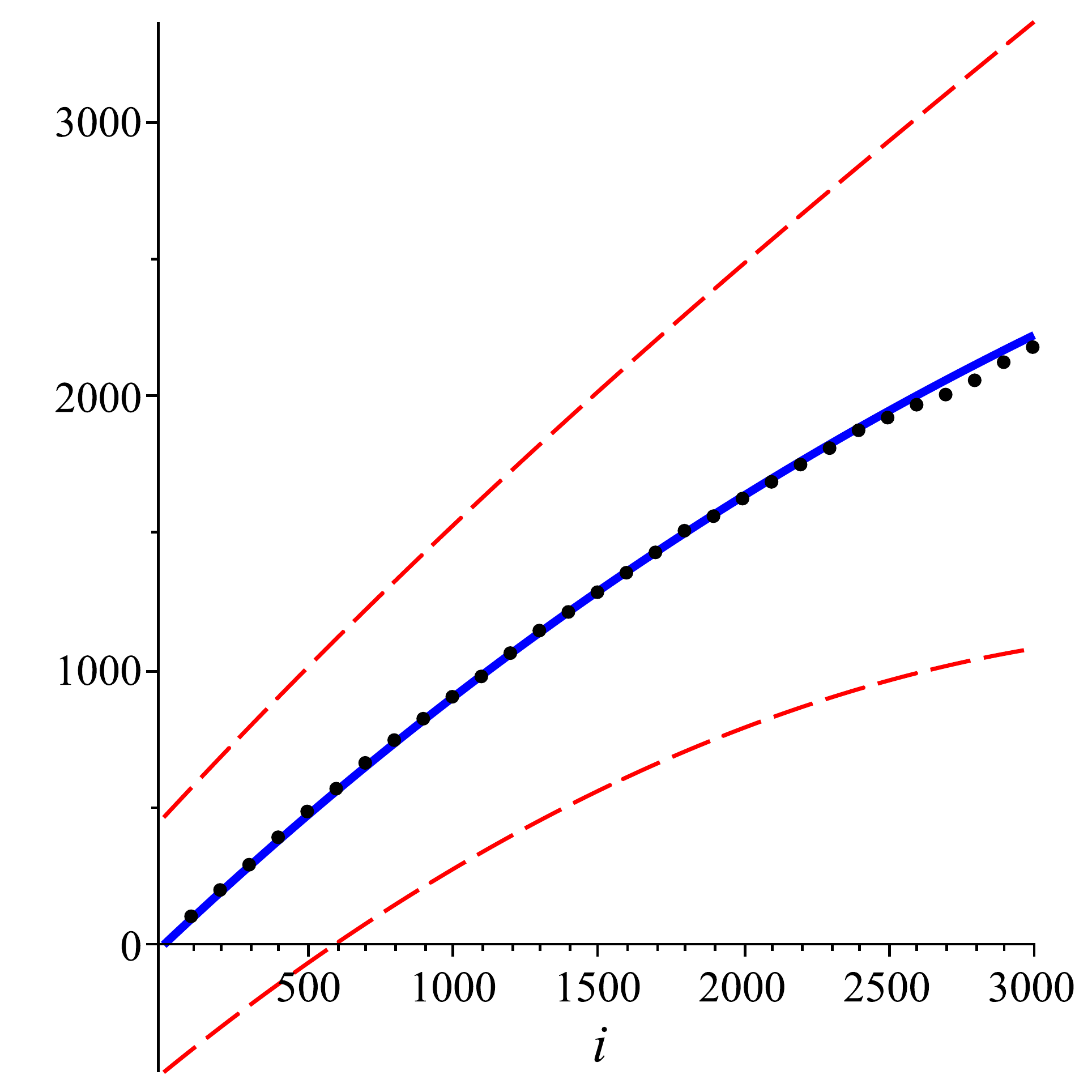}
                \caption{}
                \label{subfig:1}
\end{subfigure}
\quad
\begin{subfigure}[b]{0.4\textwidth}
                \centering
                \includegraphics[width=\textwidth]{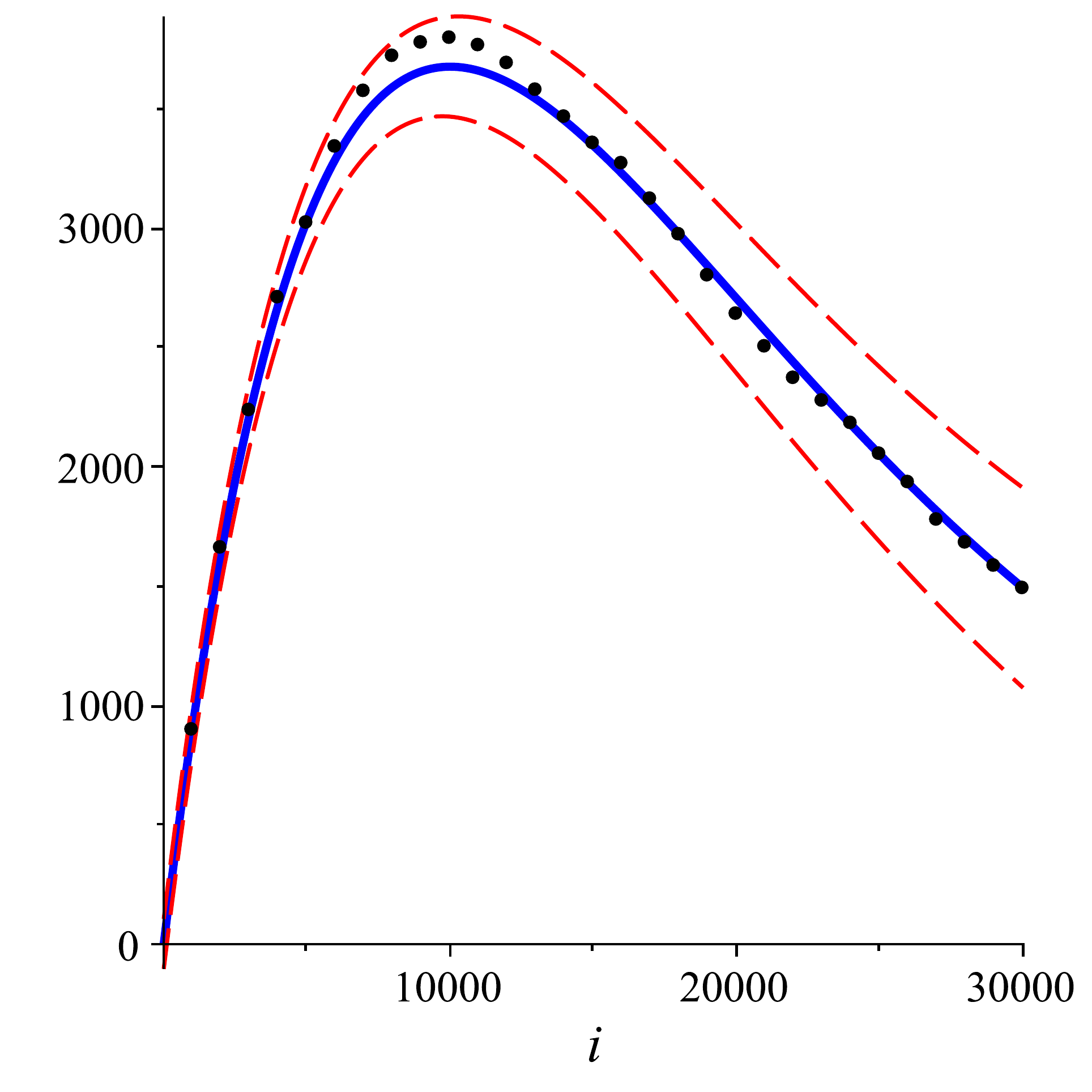}
                \caption{}
                \label{subfig:2}
\end{subfigure}%
\caption{The anticipated trajectory of the random variable $X_1(i)$ from Section~\ref{sec:bins:heuristic} (the bold blue curve), which counts the number of bins with exactly one ball in the balls and bins problem with $10,\!000$ bins and $100,\!000$ balls. 
The trajectory and the simulated values of $X_1(i)$ (black dots) are trapped between error functions (the red dashed curves)
without (\textsc{\subref{subfig:1}}) and with self-correction~(\textsc{\subref{subfig:2}}). On (\textsc{\subref{subfig:1}}) and (\textsc{\subref{subfig:2}}) there are parts of the process for $1\le i\le 3,\!000$ and $1\le i\le 30,\!000$, respectively.}
\label{fig:opt}
\end{figure}

For each step $i'$ let $\mc{E}_{i'}$ be the event that \eqref{eq:bb_goal} holds for all $i \le i'$. Essentially, $\mc{E}_{i'}$ is a ``good event'' stipulating that all our random variables are approximately what we expect them to be. We show that it is very unlikely to stray outside the good event. At this point it may be helpful to think of the method as a probabilistic version of an induction proof. In this analogy, the good event is the induction hypothesis. In a way, we are proving that when this ``induction hypothesis'' holds at step $i$, it is very likely to keep holding at step $i+1$. More precisely, if we were to show that
\begin{equation}\label{eqn:lowbound}
\Pr(\text{\eqref{eq:bb_goal} fails at step $i$}\ |\ \mc{E}_{i-1}) = o(1/m(n)),
\end{equation}
we could conclude that
\begin{equation}\label{eqn:unionbd}
   \Pr(\text{\eqref{eq:bb_goal} does not hold for some $0\le i\le m(n)$})
\le \sum_{i=0}^{m(n)} \Pr(\text{\eqref{eq:bb_goal} fails at step $i$}\ |\ \mc{E}_{i-1}) = o(1). 
\end{equation}
\noindent Technically, though, the differential equation method typically does not use the union bound in \eqref{eqn:unionbd}, and so it is not necessary to establish \eqref{eqn:lowbound}. Indeed, the method we will use give an upper bound on the probability that~\eqref{eq:bb_goal} does not hold for some $0\le i\le m(n)$ directly without any union bound over $i$. However the proof still feels much like a ``probabilistic induction'' in the sense that part of our argument will involve calculations under the assumption that the good event holds at the previous step.

Define two types of random variables $X_k^+$ and $X_k^-$,
\[
X_k^\pm=X_k^\pm(i)=
\begin{cases} 
X_k(i) - \big(x_k(t)  \pm \eps(t)\big)n & \mbox{if $\mc{E}_{i-1}$ holds,}\\
X_k^\pm (i-1) & \mbox{otherwise.}
\end{cases}
\]
We assume here that $\mc{E}_{-1}$ is the trivial event that holds with probability~1. Observe that, in particular, $X_k^\pm(0)=\mp\eps(0)n$.

We ``freeze'' the $X_k^\pm(i)$ variables outside $\mc{E}_{i-1}$ (i.e.\ if the good event ever fails these variables will remain constant from that step onward).
So if $\mc{E}_{i}$ fails, the one-step change in $X_k^\pm(i)$ is always precisely zero there and so they are super/submartingales, as required. Thus it remains to show that they are also super/submartingales when $\mc{E}_{i}$ holds. 

To establish that $X_k^+ (i)$ is a supermartingale, let us check that $\Mean{\D X_k^+ (i)| \mc{F}_{i}, \mc{E}_i}\le 0$ (here we are conditioning on $\mc{F}_i$ and $\mc{E}_i$, so we assume we are in some part of the partition $\mc{F}_i$ such that $\mc{E}_i$ holds). First observe that by~\eqref{eq:X_k} and~\eqref{eq:bb_goal} we get 
\begin{align}
\Mean{\D X_k(i) | \mc{F}_{i},\mc{E}_i} &=  -\frac{X_k}{n} + \frac{X_{k-1}}{n}\nonumber\\
&\le -\big(x_k(t) - \eps(t)\big) + \big(x_{k-1}(t) + \eps(t)\big)=-x_k(t) + x_{k-1}(t) + 2\eps(t). \label{eqn:noself}
\end{align}
Furthermore, since it is easy to check that $|x_k''(u)| = O(1)$ uniformly for all $u$, we get by Taylor's theorem that
\[
x_k(t_{i+1}) - x_k(t_i) = \frac{x_k'(t_i)}{n} + O\left(\frac{1}{n^2}\right),
\]
and similarly
\[
\eps(t_{i+1}) - \eps(t_i) = \frac{\eps'(t_i)}{n} + O\left( \frac{|\eps''(\t)|}{n^2} \right)
\]
for some $\t\in[t_i,t_{i+1}]$.
At this moment we do not know what $\eps(t)$ is, therefore, we are unable to bound its second derivative.

Now  the above calculations imply that
\begin{align*}
\Mean{\D X_k^+ (i)| \mc{F}_{i}, \mc{E}_i} &\le -x_k(t) + x_{k-1}(t) + 2\eps(t)\\
&\qquad  -x_k'(t) + O\left(\frac{1}{n}\right) - \eps'(t) + O\left( \frac{|\eps''(\t)|}{n} \right)\\
&=2\eps(t) - \eps'(t) +  O\left( \frac{|\eps''(\t)|+1}{n} \right),
\end{align*}
since $x_k'(t) = -x_k(t) + x_{k-1}(t)$. So the $X_k^+(i)$ variables are supermartingales if
\begin{equation}\label{eq:super1}
2\eps(t) - \eps'(t) +  O\left( \frac{|\eps''(\t)|+1}{n} \right) \le 0.
\end{equation}

Next we will use the Azuma-Hoeffding inequality. Clearly, $|\D X_k(i)| \le 1$. It is not difficult to show that $|n\D x_k(t_i)| \le 1$. First, one can verify that $x_k(u)$ has a unique maximum on $u\ge 0$ at $u=k$, which satisfies $x_k(k) < 1/2$. Next,  Taylor's theorem implies that
\[
|n\D x_k(t_i)| = n\left|\frac{x_k'(t_i)}{n} + O\left(\frac{1}{n^2}\right)\right|  = x_k'(k) + O\left(\frac{1}{n}\right) \le 1.
\]
Consequently, 
\[
|\D X_k^+ (i)| \le |\D X_k(i)| + |n\D x_k(t_i)| + |n\D \eps(t_i)| \le 1+1+|n\D \eps(t_i)|.
\]
We will choose $\eps(t)$ in such a way that 
\begin{equation}\label{eq:super2}
|n\D \eps(t_i)| = O(1).
\end{equation}
Note that if $\mc{E}_{i'}$ fails at any step $i' \le m$ then we either have $X_k^+(i')>0$ or $X_k^-(i') <0$ for some $k$, and since the variables $X_k^\pm$ are frozen outside the good event we would then have $X_k^+(m)>0$ or $X_k^-(m) <0$. Thus
\begin{align*}
\Pr\big(\exists\, i' \le m: \mc{E}_{i'} \mbox{ fails}\big) &\le \sum_{0 \le k \le \k} \Pr(X_k^+(m) >0) + \Pr(X_k^-(m) <0),
\end{align*}
which explains how we will avoid using the union bound from line \eqref{eqn:unionbd}. We will show that the last expression is $o(1)$. Inequality~\eqref{eq:azuma} applied with $C = O(1)$ will yield that 
\begin{align*}
    \Pr(X_k^+(m) >0) = \Pr(X_k^+(m) - X_k^+(0) > n\eps(0)) \le \exp\left( -\frac{(n\eps(0))^2}{2C^2 m} \right).
\end{align*}
In order to conclude that the latter is $o(1)$ we need
\begin{equation}\label{eq:super3}
n^2 \eps^2(0) \gg m.
\end{equation}
This will imply that 
\[
\sum_{0 \le k \le \k} \Pr\big( X_k^+(m) >0\big)
\le (\k+1)\Pr\big(X_k^+(m) >0)\big) = o(1).
\]
We will similarly see that $\sum_{0 \le k \le \k} \Pr\big( X_k^-(m) <0\big) = o(1)$, and we will justify that after a little discussion of the error function $\eps(t)$.

Finding the right error functions is in general non-trivial.
Observe that by taking for example
\begin{equation}\label{eq:bb_eps}
\eps(t) = n^{-1/3} e^{3t} \quad \text{and}\quad m \le \frac{1}{9}n\log n
\end{equation}
all conditions \eqref{eq:super1}, \eqref{eq:super2} and \eqref{eq:super3} are satisfied. Indeed, conditions~\eqref{eq:super1} and~\eqref{eq:super3} are easy to verify. For~\eqref{eq:super2} observe that by Taylor's theorem for some $\t\in[t_i,t_{i+1}]$
\[
|n\D \eps(t_i)| = |\eps'(t_i)| + O\left( \frac{|\eps''(\t)|}{n} \right)
= 3n^{-1/3}e^{3t_i} + O\left( \frac{e^{3\t}}{n^{2/3}} \right).
\]
By assumption $i\le \frac{1}{9}n\log n$, which yields $t_i\le \frac{1}{9}\log n$ and so
\[
|n\D \eps(t_i)| \le 3n^{-1/3}e^{3 \frac{1}{9}\log n} + O\left( \frac{e^{3 \frac{1}{9}\log n}}{n^{2/3}} \right)
= 3 + O(n^{-1/3}) = O(1).
\]

One can also derive symmetric calculations for $X_k^-(i)$ as follows. Since
\begin{align*}
\Mean{\D X_k(i) | \mc{F}_{i}, \mc{E}_i} &=  -\frac{X_k}{n} + \frac{X_{k-1}}{n}\\
&\ge -\big(x_k(t) + \eps(t)\big) + \big(x_{k-1}(t) - \eps(t)\big)\\
&=-x_k(t) + x_{k-1}(t) - 2\eps(t), 
\end{align*}
we get
\[
\Mean{\D X_k^-(i)| \mc{F}_{i}, \mc{E}_i} \ge-2\eps(t) + \eps'(t) +  O\left( \frac{|\eps''(\t)|+1}{n} \right) \ge 0
\]
due to the choice of~\eqref{eq:bb_eps} implying that the variables $X_k^-(i)$ are submartingales and also
\[
|\D X_k^- (i)| \le |\D X_k(i)| + |n\D x_k(t_i)| + |n\D \eps(t_i)| =O(1).
\]
Now the Azuma-Hoeffding inequality applied to the supermartingale $-X_k^-(i)$ yields
\begin{align*}
\Pr(-X_k^-(m) >0) = \Pr\big((-X_k^-(m)) - (-X_k^-(0)) > n\eps(0)\big) \le \exp\left( -\frac{(n\eps(0))^2}{2C^2 m} \right) = o(1).
\end{align*}

Finally, notice that in~\eqref{eq:bb_goal} we want to maximize $t$ having $x_k(t) \gg \eps(t)$, i.e., 
\[
\frac{t^k e^{-t}}{k!} \gg n^{-1/3} e^{3t},
\]
which is true as long as, for example, $1\ll t\le \rbrac{\frac{1}{12} - \a} \log n$ for some $\a>0$. This implies that we can take any $m\le \left(\frac{1}{12}-\alpha\right)n\log n$ as the total number of balls, in which case the number of bins with exactly $\kappa$ balls is $X_\kappa(m) \sim n\frac{t^k e^{-t}}{k!}$ a.a.s., where $t = \frac{m}{n}$.

Of course the constant ``$1/12$'' can be easily improved by optimizing our argument but it is not difficult to see that in any case, without some new idea, $t$ must always be smaller than $\frac{1}{6}\log n$ implying that $m$ must be smaller than $\frac{1}{6} n\log n$. In the next section we describe how to get better error bounds, resulting in a much better constant.




\subsection{Self-correction}\label{sec:bins:self}\footnote{This section is a bit more advanced and one can skip it without compromising the understanding of the remaining material.}
Notice that on line \eqref{eqn:noself}, we did something seemingly silly: we were proving the upper bound for the variable $X_k$, and when it came time to replace $X_k$ we used the lower bound. Of course there is a reason for this, since it makes the inequality go the right way. However maybe one wishes to do something smarter. The upper bound on $X_k$ can only fail if gets close to failing first, and when that bound is close to failing is when $\E[\D X_k(i) |  \mc{F}_{i}]$ is even smaller than it  would be otherwise. This ``pushes'' $X_k$ back down even harder. In general, we say that a variable $Y(i)$ is {\em self-correcting} if our expression for $\E[\D Y(i) |  \mc{F}_{i}]$ is decreasing in (has a negative dependence on) $Y(i)$. In this subsection we illustrate how to exploit this situation for a somewhat stronger result (see Figure \ref{fig:opt}(\textsc{\subref{subfig:2}})). 

The idea of self-correction was first used by Telcs, Wormald and Zhou~\cite{TAW2007}. Using the terminology of Bohman, Frieze and Lubetzky~\cite{BFL2015}, we will define a function $0 < \d(t) \le \eps(t)$, a {\em critical interval} 
\[
I(t)=\left[\big(x_k(t) + \d(t)\big)n, \big(x_k(t) + \eps(t)\big)n\right],
\]
and some {\em good but dangerous} events $\mct{E}_{i, j}$. For each $j \le i$ we let  $\mct{E}_{i, j}$ be the event that $\mc{E}_i$ holds and that for each step $j \le i' \le i$, the variable $X_k(i')$ has been in the critical interval $I(t(i'))$. We also define $\mct{E}_{j-1, j} := \mc{E}_{j-1}$. 

We define our super/submartingales to be frozen with respect to the new events $\mct{E}_{i, j}$
\[
X_k^\pm=X_k^\pm(i, j)=\begin{cases} 
X_k(i) - \big(x_k(t)  \pm \eps(t)\big)n & \mbox{if $\mct{E}_{i-1, j}$ holds,}\\
X_k^\pm (i-1) & \mbox{otherwise.}
\end{cases}
\]

Now  we have 
\begin{align*}
\Mean{\D X_k(i) |  \mc{F}_{i}, \mct{E}_{i, j}} &=  -\frac{X_k }{n} + \frac{X_{k-1} }{n}\nonumber\\
&\le -\big(x_k(t ) +\d(t )\big) + \big(x_{k-1}(t ) + \eps(t )\big)\\
&=-x_k(t ) + x_{k-1}(t )  - \d(t ) + \eps(t ), \nonumber
\end{align*}
and so using Taylor's theorem
\begin{align*}
\Mean{\D X_k^+ (i, j)|  \mc{F}_{i}, \mct{E}_{i, j}} &\le -x_k(t ) + x_{k-1}(t )  - \d(t ) + \eps(t )\\
&\qquad  -x_k'(t ) + O\left(\frac{1}{n}\right) - \eps'(t ) + O\left( \frac{|\eps''(\t)|}{n} \right)\\
&= - \d(t ) + \eps(t )- \eps'(t ) +  O\left( \frac{|\eps''(\t)|+1}{n} \right).
\end{align*}
Hence, if
\begin{equation}\label{eq:super2b}
 - \d(t ) + \eps(t )- \eps'(t ) +  O\left( \frac{|\eps''(\t)|+1}{n} \right)\le 0,
\end{equation}
then the $X_k^+(i, j)$ variables are supermartingales. Note that here we can see why we are doing better than our first attempt. When we compare condition \eqref{eq:super2b} to the analogous line~\eqref{eq:super1} from our first attempt, we see that \eqref{eq:super2b} is easier to satisfy. These conditions are like constraints when we choose our error functions $ \d, \eps$ and we satisfy them by making sure $\eps'$ is big enough. But a fast-growing $\eps$ translates to a weaker result so this is exactly where a better inequality yields a better end result. 

Suppose there is some step $i$ where $\mc{E}_i$ fails for the first time due to $X_k(i) > \big(x_k(t) + \eps(t) \big) n$. Consider the step $j \le i$ when $X_k$ first entered the critical interval and such that $X_k$ stayed in the critical interval from step $j$ to step $i-1$. Note that $j\neq 0$, since $X_k(0)=nx_k(0)\notin I(0)$ as $\delta(0)>0$.
This implies that at step $j-1$ we have $X_k(j-1)\le  (x_k(t_{j-1}) + \d(t_{j-1}))n$.
Thus, since $X_k(j) = X_k(j-1)+O(1)$, we get
\begin{align*}
X_k^+(j,j) &= X_k(j) - \big(x_k(t_{j})  + \eps(t_{j})\big)n\\
&= X_k(j-1)  - \big(x_k(t_{j})  + \eps(t_{j})\big)n +O(1)\\
&\le \big(x_k(t_{j-1}) + \d(t_{j-1}) - x_k(t_{j})  - \eps(t_{j})\big)n +O(1)\\
&= -\big(\eps(t_{j}) -\d(t_{j})\big)n + O(1),
\end{align*}
where we have bounded $x_k(t_{j-1}) - x_k(t_{j})$ using Taylor's theorem again, and similarly bounded $\d(t_{j-1}) - \d(t_{j})$. 
Consequently,
\begin{align*}
&\Pr\big(\exists\, i' \le m: \mc{E}_{i'-1} \mbox{ holds but } X_k(i'
) > (x_k(t_{i'}) +\eps(t_{i'}))n\big)\\
&\le \Pr\big(\exists\, j :  X_k^+(m,j) >0\big)\\
&\le \sum_j \Pr\big(X_k^+(m,j) >0\big)\\
&\le \sum_j\Pr\big(X_k^+(m,j) - X_k^+(j,j) > \big(\eps(t_{j}) -\d(t_{j})\big)n + O(1)\big).
\end{align*}
For the Azuma-Hoeffding inequality we also need to make sure that the error function 
\begin{equation}\label{eq:bins_azuma_cond}
|n\D \eps(t_i)| = O(1),
\end{equation}
since we want to have
\[
|\D X_k^+ (i,j)| \le |\D X_k(i)| + |n\D x_k(t_i)| + |n\D \eps(t_i)| \le 1+1+|n\D \eps(t)| = O(1).
\]
Now the Azuma-Hoeffding inequality applied with $m = O(n\log n)$  (recall that in Theorem~\ref{thm:balls} we assume $m \le (\frac{1}{2}-\alpha)n\log n$) and
\[
\lambda = \big(\eps(t_{j}) -\d(t_{j})\big)n + O(1)
\]
yields that the probability of the failure is at most $\exp( -\frac{\lambda^2}{O(m)}).$
Because of the union bound over all $j \le m$,  we need this failure probability to be $o(1/(n\log n))$, so it suffices to choose, for example,  
\[
\delta(t) = n^{-\frac{1}{2}+\frac{\a}{2}} \left(\frac{1}{2}+t\right) \quad \text{ and }\quad \eps(t) = n^{-\frac{1}{2}+\frac{\a}{2}}(1+t),
\]
which also satisfies \eqref{eq:super2b} and~\eqref{eq:bins_azuma_cond} for any arbitrarily small constant $\a>0$.
Note that this error function $\eps(t)$ is much better (that means smaller) than before, since it does not grow exponentially with $t$. Consequently, $t$ (as well $m$) can be chosen to be larger than in the previous section. Finally, observe that $x_k(t) \gg \eps(t)$ even for $t=(\frac{1}{2}-\a)\log n$.

We skip similar calculations for variables $X_k^-$.

\section{Components of  $G(n, m)$ with $k$ vertices}\label{sec:comps}

In this section we prove Theorem \ref{thm:comps}. Recall that our goal is to track the number of components of order~$k$ in $\G(n,\lfloor cn\rfloor)$ for any $1\le k\le \kappa$, where $c$ and $\kappa$ are positive constants. We will track these random variables much the same way as we did in the last section. First, we will guess their trajectories and then rigorously prove that our guess was correct.

Of course determining the sizes of components in a random graph has a lot of history, starting with  Erd\H{o}s and R\'enyi who determined the threshold for a giant component in \cite{ER1960}. Achlioptas posed the following question at a conference: if at each step we are presented with a choice of two random edges and we get to choose which one to add to our graph, can we significantly delay (or accelerate) the emergence of a giant component by implementing an appropriate strategy for how to choose which edge to add? Spencer and Wormald \cite{SW2007} showed that one can delay the giant, and Bohman and Kravitz \cite{BK2006} showed that one can accelerate it. Achlioptas, D'Souza and Spencer \cite{Ach} also provided numerical evidence which seemed to suggest that certain rules would give rise to a {\em discontinuous phase transition},  where the size of the largest component jumps from sublinear to linear in a sublinear number of steps. Surprisingly, Riordan and Warnke~\cite{RW2012} showed for a broad class of strategies  the phase transitions are actually continuous. It is worth noting that the basic ideas we present in our analysis are applicable to Achlioptas processes as well.

\subsection{Determining trajectories heuristically}
Instead of working with $\G(n,m)$ we will consider the Erd\H{o}s-R\'enyi random graph process that starts with $n$ vertices and no edges, and at each step adds one new edge chosen uniformly from the set of missing edges. 
Hence, after precisely $i\le \lfloor cn\rfloor$ edges were revealed, we obtain a graph that is equivalent to $\G(n,i)$.
Let $Y_k(i)$ be the number of components with exactly $k$ vertices in $\G(n,i)$.
Assume heuristically that $Y_k(i) \sim ny_k(t)$ for some function $y_k(t)$, where $t=t_i :=\frac{i}{n}$. Clearly, we must also have $y_1(0)=1$ and $y_k(0) = 0$ for $2\le k\le \kappa$.

Now we will estimate the expected value of $Y_k$. For ease of calculations, we will use the binomial model of random graphs $\G(n,p)$ in which every possible edge occurs independently with probability~$p$. Since in many instances $\G(n,i)$ can be viewed as $\G(n,p)$ with $p=\frac{i}{\binom{n}{2}} \sim \frac{2t}{n}$ (let us emphasize that this is not a rigorous argument), we get
\[
\E[Y_1(i)] \sim n (1-p)^{n-1} \sim n e^{-pn} \sim n e^{-2t},
\]
where the log expansion was used.

Now we consider the case $k\ge 2$. Since in this range of $p=O(1/n)$, the expected number of short cycles is $\binom{n}{k}p^k = O(1)$, $\E[Y_k(i)]$ is asymptotically equal to the expected number of isolated and labelled trees of order $k$. For the latter, first we choose a $k$-subset of vertices~$K$. Next we embed into $K$ a tree~$T$, clearly, with $k-1$ edges. Here we use Cayley's formula to conclude that we have exactly $k^{k-2}$ possible labelled trees~$T$ and each of them appears with probability $p^{k-1} (1-p)^{\binom{k}{2}-(k-1)}$. Finally, observe that $T$ is isolated with probability $(1-p)^{k(n-k)}$. Thus,
\[
\E[Y_k(i)] \sim \binom{n}{k} k^{k-2} p^{k-1} (1-p)^{k(n-k)+\binom{k}{2}-(k-1)}
\sim \frac{n^k}{k!} k^{k-2} p^{k-1} e^{-kpn}
\sim n \cdot \frac{k^{k-2}}{k!} (2t)^{k-1} e^{-2kt}.
\]

Consequently, unifying the above two cases yields that $Y_k(i) \sim ny_k(t)$ for $k\ge 1$, where
\[
y_k(t) = \frac{k^{k-2}}{k!} (2t)^{k-1} e^{-2kt}.
\]


\subsection{Rigorous argument}

We let $\mc{E}_{i'}$ be the event that for each $1\le k\le \kappa$ and each $i \le i'$ we have
\begin{equation}\label{eq:components_goal}
n\big(y_k(t) - \eps(t)\big) \le Y_k(i)\le n\big(y_k(t) + \eps(t)\big),
\end{equation}
where 
\[
\eps(t) = n^{-1/3} e^{6\kappa^3 t}.
\]
Considering the values of functions $y_k(t)$ and $\eps(t)$ we can even get tight concentration on our random variables when the number of edges is of order $n\log n$ (since $\eps(t) =o( y_k(t))$ when $i=cn\log n$ for a small enough constant $c>0$). But we only interested in $i=O(n)$.

Define variables 
\[
Y_k^\pm=Y_k^\pm(i)=\begin{cases} 
Y_k(i) - \big(y_k(t)  \pm \eps(t)\big)n & \mbox{if $\mc{E}_{i-1}$ holds,}\\
Y_k^\pm (i-1) & \mbox{otherwise.}
\end{cases}
\]
We show that the variables $Y_k^+(i)$ are supermartingales and $Y_k^-(i)$ are submartingales for $i\le \lfloor cn\rfloor$.

Assume that precisely $i$ edges were revealed during the Erd\H{o}s-R\'enyi process.
The probability of choosing any particular $(i+1)$st edge that has not been chosen yet is 
\[
 \frac{1}{\binom{n}{2} - i} = \frac {2}{n^2} \left(1+O\left(\frac{1}{n}\right)\right).
\]

We will analyze the expected one-step change of $Y_k(i)$. Assume that the graph $G(n,i)$ is already generated. Now a new edge $e$ is added uniformly at random from the set of missing edges. First we discuss the negative contribution to $\E[\D Y_k(i) |  \mc{F}_{i}]$, meaning the expected number of components of order $k$ that are lost in one step. To lose such components, we must have either exactly one of the endpoints of $e$ in a component of order~$k$ (here we have $kY_k\cdot (n-kY_k)$ choices, each of which would decrease $Y_k$ by 1) or both endpoints of $e$ belong to different  components of order~$k$ (yielding $\binom{Y_k}{2}k^2$ choices that decrease $Y_k$ by 2). Thus, the negative contribution is 
\begin{align*}
&-\left(kY_k(n-kY_k) + 2\binom{Y_k }{2}k^2 \right) \cdot \frac {2}{n^2} \left(1+O\left(\frac{1}{n}\right)\right) = -\frac{2kY_k}{n} +O\left(\frac{1}{n}\right),
\end{align*}
where we used the fact that trivially $Y_k \le n$ and so $\frac{Y_k}{n^2} = O(\frac{1}{n})$.

For the positive contribution, $e$ must be between two components of order $j$ and $k-j$ for $1\le j\le k-1$ giving $jY_j \cdot(k-j)Y_{k-j} $ choices for $e$. Hence, the positive contribution is
\[
\frac{1}{2}\sum_{j=1}^{k-1} jY_j (k-j)Y_{k-j} \cdot \frac {2}{n^2} \left(1+O\left(\frac{1}{n}\right)\right) = \sum_{j=1}^{k-1} j(k-j) \frac{Y_j }{n}\frac{Y_{k-j} }{n}  +O\left(\frac{1}{n}\right).
\]
The $\frac{1}{2}$ factor above is needed since in the above sum each possible edge $e$ is counted exactly twice.

 Thus,
 \[
 \Mean{\D Y_k(i) |  \mc{F}_{i}} =  -\frac{2kY_{k} }{n} + \sum_{j=1}^{k-1} j(k-j) \frac{Y_j }{n}\frac{Y_{k-j} }{n} + O\left(\frac{1}{n}\right).
 \]
(We note in passing that from here we can see that $Y_k$ is self-correcting. We will not exploit it here, but the interested reader might try using it to improve the error functions we will get.) Therefore, we have (assuming as before that the good event $\mc{E}_{i}$  holds) 
\begin{align*}
\Mean{\D Y_k(i) |  \mc{F}_{i}} &\le -2k\big(y_k(t) - \eps(t)\big) + \sum_{j=1}^{k-1} j(k-j) \big(y_j(t) + \eps(t)\big)\big(y_{k-j}(t) + \eps(t)\big) + O\left(\frac{1}{n}\right)\\
&= -2ky_k(t) + \sum_{j=1}^{k-1} j(k-j)y_j(t) y_{k-j}(t) \\
&\qquad\qquad+ 2k\eps(t)  + \eps(t) \cdot \sum_{j=1}^{k-1} j(k-j) \big(y_j(t) + y_{k-j}(t)+\eps(t)\big)  + O\left(\frac{1}{n}\right).
\end{align*}
Now since $y_j + y_{k-j}+ \eps \le 3$ we have 
\[
\sum_{j=1}^{k-1} j(k-j) \big(y_j(t) + y_{k-j}(t)+\eps(t)\big) \le 3k^3,
\]
and so we finally get 
\[
\Mean{\D Y_k(i) |  \mc{F}_{i}} \le -2ky_k(t) + \sum_{j=1}^{k-1} j(k-j)y_j(t) y_{k-j}(t) + 5k^3\eps(t) + O\left(\frac{1}{n}\right).
\]
Consequently, Taylor's theorem yields
\begin{align*}
\Mean{\D Y_k^+ (i)|  \mc{F}_{i}, \mc{E}_{i}} 
&= -2ky_k(t) + \sum_{j=1}^{k-1} j(k-j)y_j(t) y_{k-j}(t) + 5k^3\eps(t) \\
&\qquad\qquad - y_k'(t)  - \eps'(t) + O\left(\frac{1}{n}\right)\\
&=5k^3\eps(t)- \eps'(t) + O\left(\frac{1}{n}\right) \le 0,
\end{align*}
due to the following claim and choice of $\eps(t)$.

\begin{claim}\label{claim:components}
For any positive integer $k$,
\[
y_k'(t) = -2ky_k(t) + \sum_{j=1}^{k-1} j(k-j)y_j(t) y_{k-j}(t).
\]
\end{claim}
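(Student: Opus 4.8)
The plan is to prove the claim by substituting the explicit formula $y_k(t)=\frac{k^{k-2}}{k!}(2t)^{k-1}e^{-2kt}$ and reducing the differential identity to a classical enumerative fact about labelled trees. Differentiating the product $(2t)^{k-1}e^{-2kt}$ gives, for every $k\ge 2$,
\[
y_k'(t)+2k\,y_k(t)=\frac{2(k-1)k^{k-2}}{k!}\,(2t)^{k-2}e^{-2kt},
\]
while for $k=1$ we have simply $y_1(t)=e^{-2t}$, so $y_1'(t)=-2y_1(t)$ matches the claim with the empty sum on the right. On the other hand, writing $y_j(t)y_{k-j}(t)=\frac{j^{j-2}(k-j)^{k-j-2}}{j!\,(k-j)!}(2t)^{k-2}e^{-2kt}$ and using $\frac{1}{j!\,(k-j)!}=\frac{1}{k!}\binom{k}{j}$, one gets
\[
\sum_{j=1}^{k-1} j(k-j)\,y_j(t)y_{k-j}(t)=\frac{(2t)^{k-2}e^{-2kt}}{k!}\sum_{j=1}^{k-1}\binom{k}{j}j^{j-1}(k-j)^{k-j-1}.
\]
Comparing the two displays, Claim~\ref{claim:components} is equivalent to the identity
\[
\sum_{j=1}^{k-1}\binom{k}{j}j^{j-1}(k-j)^{k-j-1}=2(k-1)k^{k-2}.
\]

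To prove this identity I would argue combinatorially. By Cayley's formula there are exactly $j^{j-1}$ \emph{rooted} trees on a fixed set of $j$ labelled vertices ($j^{j-2}$ trees times $j$ choices of root), so the left-hand side counts the number of ways to split $[k]$ into an ordered pair of nonempty parts and place a rooted tree on each part; equivalently, the number of ordered pairs of vertex-disjoint rooted trees whose vertex sets partition $[k]$. Each unordered rooted forest on $[k]$ with exactly two components arises from exactly two such ordered pairs, so the left-hand side equals $2F$, where $F$ is the number of rooted forests on $[k]$ with two trees. To count $F$ (for $k\ge 2$), adjoin a new vertex $0$ and join it to the two roots: this is a bijection onto the trees on $\{0,1,\dots,k\}$ in which vertex $0$ has degree exactly $2$. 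In the Pr\"ufer correspondence a vertex of degree $d$ occurs exactly $d-1$ times in the Pr\"ufer word, so these are the words of length $k-1$ over $\{0,1,\dots,k\}$ in which $0$ occurs exactly once, and there are $\binom{k-1}{1}k^{k-2}=(k-1)k^{k-2}$ of them. Hence $F=(k-1)k^{k-2}$ and the identity follows. (Alternatively, the identity drops out of Lagrange inversion applied to the tree function $T(u)=\sum_{k\ge1}\frac{k^{k-1}}{k!}u^k=u\,e^{T(u)}$, since $k!\,[u^k]T(u)^2=\frac{k!}{k}[u^{k-1}]\bigl(2u\,e^{ku}\bigr)=2(k-1)k^{k-2}$.)

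I expect the only genuine obstacle is recognizing and establishing this tree identity; everything else is routine bookkeeping — differentiating a monomial times an exponential, and the algebra of the convolution term. The one subtlety worth a sentence is the degenerate case $k=1$, where the factor $(2t)^{k-2}$ is singular at $t=0$ but is multiplied by $2(k-1)=0$; there one simply checks $y_1(t)=e^{-2t}$ directly, as above. So the final write-up would be: first the two displayed one-line computations of $y_k'(t)+2k\,y_k(t)$ and of the convolution sum; then the reduction to the binomial tree identity; then the short combinatorial (or Lagrange-inversion) proof of that identity; and finally the one-line $k=1$ check.
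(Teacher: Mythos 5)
Your proposal is correct, and the first half of it -- differentiate the explicit $y_k$, rewrite the convolution sum, and reduce the claim to the identity
\[
\sum_{j=1}^{k-1}\binom{k}{j}j^{j-1}(k-j)^{k-j-1}=2(k-1)k^{k-2}
\]
-- is exactly what the paper does. Where you diverge is in the proof of this identity. The paper double-counts unrooted labelled trees on $[k]$: choose a split $J$, a tree on each side, and an edge between them; each tree on $[k]$ is produced exactly $2(k-1)$ times (once per choice of edge to cut, and twice for the orientation of the cut), so by Cayley's formula the count is $2(k-1)k^{k-2}$. You instead read $j^{j-1}(k-j)^{k-j-1}$ as a count of ordered pairs of \emph{rooted} trees on the two parts, observe this is twice the number of two-component rooted forests on $[k]$, pass to trees on $\{0,\dots,k\}$ with $\deg(0)=2$ by attaching a new vertex to both roots, and finish with the degree property of the Pr\"ufer correspondence. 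Both arguments are correct; the paper's is a touch more economical (Cayley only, no Pr\"ufer), while yours is equally clean and offers the Lagrange-inversion alternative as a bonus. Your handling of the $k=1$ case (directly checking $y_1'=-2y_1$) matches the paper's remark that this case is trivial.
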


Before we prove this claim note that we obtained a system of first-order nonlinear differential equations and it is not obvious how to solve it without predicting the solutions first. One can easily solve for $y_0$ and then use it to get $y_1$ and so on but how to obtain a closed-form expression for $y_k$ this way is not clear.  Once we observe that our functions $y_k$ are a solution to the system of differential equations above, it is easy to argue that they are the unique solution.  Indeed this follows by induction since the expression for $y_k'$ involves only $y_0, \ldots, y_k$ and if we already know $y_0, \ldots, y_{k-1}$ then we have a first-order linear differential equation in $y_k$. 

\begin{proof}[Proof of Claim~\ref{claim:components}]
For $k=1$ it is trivial. For $k\ge 2$ we get that 
\begin{align}\label{eq:checking_diff}
-2ky_k(t) + \sum_{j=1}^{k-1} j(k-j)&y_j(t) y_{k-j}(t)
= -2\frac{k^{k-1}}{k!} (2t)^{k-1} e^{-2kt}\notag \\
&\qquad\qquad\qquad\qquad\qquad + (2t)^{k-2} e^{-2kt} \sum_{j=1}^{k-1} \frac{j^{j-1} (k-j)^{k-j-1}}{j!(k-j)!}\notag \\
&= -2\frac{k^{k-1}}{k!} (2t)^{k-1} e^{-2kt} + (2t)^{k-2} e^{-2kt} \frac{1}{k!} \sum_{j=1}^{k-1} \binom{k}{j} j^{j-1} (k-j)^{k-j-1}\notag \\
&= -2\frac{k^{k-1}}{k!} (2t)^{k-1} e^{-2kt} + (2t)^{k-2} e^{-2kt} \frac{1}{k!} \cdot 2(k-1)k^{k-2},
\end{align}
where the latter follows from~\eqref{eq:diff_eq} (see below). Finally, note that the product rule implies that $y_k'(t)$ is exactly given by \eqref{eq:checking_diff}.   

It remains to show that for any positive integer~$k$ the following holds:
\begin{equation}\label{eq:diff_eq}
\sum_{j=1}^{k-1} \binom{k}{j} j^{j-1} (k-j)^{k-j-1}= 2(k-1)k^{k-2}.
\end{equation}
We will count the labelled trees of order $k$ on a set $K$ of size~$k$. First choose a subset $J\subseteq K$ of size $j$ ($\binom{k}{j}$ choices), next a labelled  tree $T_1$ on $J$ ($j^{j-2}$ choices due to Cayley's formula) and a tree $T_2$ on $K\setminus J$ ($(k-j)^{k-j-2}$  choices). Finally choose an edge $e$ between $T_1$ and $T_2$ ($j(k-j)$ choices). Clearly $T_1$ and $T_2$ together with $e$ is a labelled tree on $K$. In order to finish the proof observe that each tree on $K$ can be obtained by the above procedure in exactly $2(k-1)$ ways.
\end{proof}

Now we are going to use the Azuma-Hoeffding inequality. First observe (similarly as in Section~\ref{sec:bins:rigorous}) that 
\[
|\D Y_k^+ (i)| \le |\D Y_k(i)| + |n\D y_k(t_i)| + |n\D \eps(t_i)| =O(1),
\]
since $|\D Y_k(i)| \le 2$. Thus for $m=O(n)$, 
\begin{align*}
\Pr\big(\exists\; i'\le m: \mc{E}_{i'-1} \mbox{ holds but }& Y_k(i') > n(y_k(t_{i'}) + \eps(t_{i'}))\big) = \Pr(Y_k^+(m) >0)\\
& = \Pr(Y_k^+(m) - Y_k^+(0) > n\eps(0)) \le \exp\left( -\frac{(n\eps(0))^2}{2C^2 m} \right) = o(1),
\end{align*}
yielding that the upper bound in~\eqref{eq:components_goal} holds a.a.s..

One can also show by providing symmetric calculations that the variables $Y_k^-(i)$ are submartingales and also~\eqref{eq:azuma} applied to supermartingales $-Y_k^-(i)$ yields that a.a.s.\  the lower bound in~\eqref{eq:components_goal} holds.

\section{Almost perfect matchings in regular graphs}\label{sec:match}

In this section we prove Theorem \ref{thm:match}. We assume that $G=G_n$ is a $d$-regular graph of order $n$ with $d=d_n$ satisfying $d \gg \log n$. To find a large matching, we consider the random process that chooses one edge at every step, chosen randomly from all edges in $G$ that do not share any endpoints with previously chosen edges (or we halt if no such choice is possible). A more formal description follows. 

\subsection{Random greedy algorithm}
 We consider the following random greedy process that forms a matching~$M$. The process builds a matching step by step. At step $i$ we call the current matching $M(i)$, the set of unmatched vertices $V(i)$, and we call the graph induced on unmatched vertices $G(i) = G[V(i)]$. Say our starting graph is $G=(V, E)$.  We start with $M(0)=\emptyset$, $G(0)=G$ and $V(0)=V$. In step $i\ge 1$ an edge $e_i$ is chosen uniformly at random from $G(i-1)$ and added to $M(i-1)$ to form a matching $M(i)$. Then graph $G(i)$ is obtained from $G(i-1)$ by setting $V(i) = V(i-1)\setminus e_i$ and $G(i) = G[V(i)]$.

The goal is to estimate the size of $M$ at the end of the process. We show that a.a.s.\  the above algorithm finds a matching of size at least $(\frac{1}{2}-\a)n$ for a function $\alpha = \alpha(n)=o(1)$. In other words, all but at most $2\a n$ vertices are matched.

\subsection{Determining trajectories heuristically}
We will show that at each step $i <(\frac{1}{2}-\a)n$, $G(i)$ is nonempty. In order to do it, we will keep track of vertex degrees. Let 
\[
t = t_i := \frac{i}{n}
\]
be a continuous time parameter.
We anticipate that $G(i)$ resembles a subgraph of $G$ induced by a random subset of the vertices where each vertex
is included independently with probability
\[
 \frac{n-2i}{n} = 1 - \frac{2i}{n} = 1-2t =:p=p(t),
\]
and note that if we only track the process for $(\frac{1}{2}-\a)n$ steps, then we always have 
\begin{equation}\label{eq:matchings:p_alpha}
p \ge 2\a.
\end{equation} 
We guess that the degree of vertex $v$ in $G(i)$, denoted by $D_v(i)$, should be about~$dp$.

\subsection{Rigorous argument}
Recall that $G=(V,E)$ is a $d$-regular graph of order $n$ with $d=d_n$ satisfying $d \gg \log n$.  For convenience, we will define $D_v$ even for matched vertices~$v$; we simply let $D_v$ be the number of unmatched neighbors of $v$. We let our good event $\mc{E}_{i'}=\mc{E}_{i'}(v)$ be that for all $i\le i'$ and all $v\in V$ we have
\begin{equation}\label{eq:matching:goal}
dp(t)  - \eps(t ) \le D_v(i) \le dp(t)  + \eps(t ),
\end{equation}
for some error function $\eps< dp$ to be determined later. 
We will show that a.a.s.\  the good event $\mc{E}_{(1/2 - \a)n}$ holds. This will imply that up to step $(1/2 - \a)n$ we have  $D_v \ge dp(t)-\eps(t) > 0$  
and so there are still edges in $G(i)$, meaning the process keeps running. Thus, we get a matching of size at least $\left(1/2-\a \right)n$.

We remark that the analysis we provide here can be improved and generalized (see Bennett and Bohman~\cite{BB2019}).

As in the previous examples we define
\[
D_v^\pm=D_v^\pm(i)=\begin{cases} 
D_v(i) - \big(dp(t)  \pm \eps(t )\big) & \mbox{if $\mc{E}_{i-1}$ holds,}\\
D_v^\pm (i-1) & \mbox{otherwise.}
\end{cases}
\]
We then show that these variables are super/submartingales.

We calculate the expected one-step change of $D_v(i)$. Assume that $G(i) = (V(i), E(i))$ was already generated and now a new edge $e$ is chosen uniformly at random from this graph and added to~$M(i)$. The degree of a vertex $v$ can only be affected by $e$ if $e$ has an endpoint in the neighborhood of $v$, $N(v)$. 
Thus,
\begin{align}\label{eqn:1scD}
\Mean{\D D_v(i) | \mc{F}_{i}}  &= -\frac{ \sum_{e \in E(i)}  |N_{G(i)}(v) \cap e|}{|E(i)|} = -\frac{ \sum_{u\in N_{G(i)}(v)} D_u}{\frac{1}{2}\sum_{w\in V(i)}  D_w},
\end{align}
where in the second equality the denominators are clearly equal, and the numerators are equal because both sums count edges with one endpoint in $N_{G(i)}(v)$ once, and edges with both endpoints there twice. (From \eqref{eqn:1scD} we can see that $D_v$ is self-correcting, since the sum in the numerator has $D_v$ many terms. We leave it as an exercise for the reader to exploit self-correction and improve the result we will get here.)

Using \eqref{eqn:1scD}, we have
\begin{align*}
\Mean{\D D_v(i) | \mc{F}_{i}, \mc{E}_{i}}  &\le -\frac{(dp -\eps)(dp -\eps)}{\frac{1}{2} np(dp+\eps)}= -\frac{2d}{ n} \cdot \frac{\left( 1-\frac{\eps}{dp} \right)^2}{1+\frac{\eps}{dp}}\notag \\
&=-\frac{2d}{n} \cdot \left(1- \frac{3\eps}{dp} + \frac{4(\frac{\eps}{dp})^2}{1+\frac{\eps}{dp}}\right)
=-\frac{2d}{n} + \frac{6\eps}{np} + O\left(\frac{\eps^2}{ndp^2}\right),
\end{align*}
since we are assuming that $1\le \eps \le dp$.
An attentive reader can note that the big-O error term it is not needed here, since for any number $x>0$, we have $\frac{(1-x)^2}{1+x} \ge 1-3x$. However, at the end we like to claim that almost everything that we do for supermartingales is ``symmetric'' for the submartingales. Since the big-O term will be necessary for the submartingales, we will include it in our calculations for the supermartingale and show that it is insignificant. We have
\[
\Mean{\D D_v^+ (i)|  \mc{F}_{i}, \mc{E}_{i}} \le -\frac{2d}{n} + \frac{6\eps}{np}-\frac{dp'(t)}{n} - \frac{\eps'}{n} + O\left(\frac{\eps^2}{ndp^2} + \frac{\eps''(\t)}{n^2}\right),
\]
where the big-O term now also contains an error from Taylor's theorem (note that we only have an $\eps''$ term since $p''=0)$.
Furthermore, since $p'(t) = -2,$
we obtain that
\begin{equation}\label{eqn:supd}
\Mean{\D D_v^+ (i)|  \mc{F}_{i}, \mc{E}_{i}} \le \frac{6\eps}{np} - \frac{\eps'}{n} + O\left(\frac{\eps^2}{ndp^2} + \frac{\eps''(\t)}{n^2}\right) .
\end{equation}

Now we discuss how to choose $\eps(t)$. Since we would like the above to be negative, we first think of making 
\[
\eps' > \frac{6 \eps}{p}
\]
``with room to spare'', in the hopes that this extra room will beat the big-O term. We see that we can get $\eps' = 8\eps/p$ if we choose 
\[
\eps(t) = s (p(t))^{-4}
\]
for any scaling factor $s=s(n)$ not depending on $t$.  We will be forced to choose $s \to \infty$ for this error bound to hold a.a.s.. But in order to be a useful bound (i.e.\ to guarantee that $D_v$ remains positive) we must have $dp > \eps(t) =sp^{-4}$, so we will only prove the algorithm keeps running so long as $p > \rbrac{s/d}^{1/5}$. Thus, since $p\ge 2\alpha$ (cf.~\eqref{eq:matchings:p_alpha}), we will require 
\begin{equation}\label{eqn:alphareq1}
\a \gg \rbrac{\frac{s}{d}}^{1/5}.
\end{equation}

 Now we can see that  \eqref{eqn:supd} is negative so long as $p \ge 2\a\gg \rbrac{\frac{s}{d}}^{1/5} \gg 1/n$.
 Indeed, the right-hand side of \eqref{eqn:supd} becomes
\[
-\frac{2s}{np^5} + O\left(\frac{s^2}{ndp^{10}} + \frac{s}{n^2(p(\t))^6}\right)
= \frac{s}{np^5} \left(-2 + O\left(\frac{s}{dp^{5}} + \frac{1}{np}\right)  \right) < 0,
\]
since $\t\in [t_i,t_{i+1}]$ and hence $p(\t) = 1-2\t\ge 1-2t_{i+1} = 1-2t_i - 1/n$. 
So we just showed that the $D_v^+$ are supermartingales. 
Symmetric calculations show that the $D_v^-$ are submartingales.


One can easily check that the Azuma-Hoeffding inequality is not strong enough for this problem. Indeed, in this case our failure probability would be 
\begin{align*}
\Pr(D_v^+(tn) - D_v^+(0) > \eps(0))\le \exp\left( -\frac{(\eps(0))^2}{2C^2 tn} \right)
=\exp\left(-\frac{s^2}{O(n)} \right), 
\end{align*}
which requires $s\gg \sqrt{n}$ to be small. But $p > \rbrac{s/d}^{1/5}$ yields that $d\ge s$ and so $d\gg \sqrt{n}$. Thus, the approach with  the Azuma-Hoeffding inequality is completely useless if $d\le \sqrt{n}$. We could settle for proving this result only for $d \gg \sqrt{n}$, but we would like to do better. Therefore, we are going to use a different martingale concentration inequality to bound the failure probability. 
How should it be different? The problem with the Azuma-Hoeffding inequality here is the parameter~$C$ which we are forced to take to be a constant, essentially because $D_v$ (and therefore $D_v^{\pm}$) can change by a constant at any step. But when the graph is very sparse (the case we cannot handle with the Azuma-Hoeffding inequality), the event that $D_v$ changes at all is rare and actually its expected one-step change is $o(1)$.  For variables such as this, which experience relatively large but rare one-step changes, Freedman's inequality~\eqref{eq:freedman} is often helpful.

Before we apply Freedman's inequality we need to do some preparation. Note that
we can take $C=O(1)$, since 
\[
|\D D_v^+ (i)| \le |\D D_v(i)| + |d\D p(t_i)| + |\D\eps(t_i)| \le 2 + d\cdot \frac{2}{n} + O\left(\frac{s}{n\a^5}\right) \le O(1)
\] 
(and note the same bound holds for $|\D D_v^- (i)|$). Using the above bound, next we bound the one-step variance. Outside the good event $D_v^+$ is frozen so $\Var[\D D_v^+ (i)|  \mc{F}_{i}, \neg \mc{E}_{i}]=0$. Inside the good event we have
\[
\Var[\D D_v^+ (i)|  \mc{F}_{i}, \mc{E}_{i}] \le \E[(\D D_v^+ (i))^2|  \mc{F}_{i}, \mc{E}_{i}] \le C \cdot \E[\;|\D D_v^+ (i)|\; |  \mc{F}_{i}, \mc{E}_{i}]
\]
for some absolute constant $C > 0$.
When applying Freedman's inequality in this context, it is common to use extremely simple bounds on the variance as above. Now we try to obtain a crude bound on $\E[\;|\D D_v^+ (i)|\; |  \mc{F}_{i}, \mc{E}_{i}]$. In the good event we have
\begin{align*}
|\D D_v^+ (i)| &= \abrac{\D D_v (i)  -\frac{dp'(t)}{n} - \frac{\eps'}{n} + O\left(\frac{\eps''(\t)}{n^2}\right)}\\
&= \abrac{\D D_v (i) +\frac{2d}{n}  -\frac{2s}{np^5} + O\left(\frac{s}{n^2p^6}\right)}
\le \abrac{\D D_v (i)}  + O\rbrac{\frac{d}{n}}.
\end{align*} 
For $k\in\{0,1,2\}$, let $x_k$ be the number of edges with exactly $k$ endpoints in $N_{G(i)}(v)$. Since, trivially, $x_0 \le 2ndp$ (the ``2'' could be replaced with ``$1+o(1)$'' actually) and similarly  $x_1, x_2 \le 2d^2p^2$ and $d/n\le 1$, we have 
\begin{align*}
  \E[\;|\D D_v^+ (i)|\; |  \mc{F}_{i}, \mc{E}_{i}]& \le \frac{\sum_{e \in E(i)} \abrac{ \frac{2d}{n} - |N_{G(i)}(v) \cap e| }}{|E(i)|}\\
  &= \frac{ x_0 \cdot O\rbrac{\frac{d}{n}} + x_1 \cdot \rbrac{1+O\rbrac{\frac{d}{n}}}+ x_2 \cdot \rbrac{2+O\rbrac{\frac{d}{n}}} }{|E(i)|}\\
  & = O\rbrac{\frac {d^2 p + d^2 p^2}{ndp}} = O\rbrac{\frac {d  }{n}}.
\end{align*}

Consequently, for Freedman's inequality we can choose the parameter $b$ as follows:
\[
\sum_{i} \Var[ \D D_v^+ (i) |  \mc{F}_{i}] = O(n) \cdot O\rbrac{\frac{d}{n}} = O(d) 
\]
 so that we can choose $b:=O(d)$  is an upper bound on the sum of one-step variances over all steps.
Now since $D_v^+ (0) = -s$, we set $\lambda = s$. Thus, Freedman's inequality implies that the probability that the good event $\mc{E}_{i'}$ fails due to the condition ``$D_v(i') > dp(i') + \eps(t_{i'})$'' is at most
\begin{align*} 
\Pr&\big(\exists\; i'\le m: \mc{E}_{i'-1} \mbox{ holds but } D_v(i') > dp(t_{i'}) + \eps(t_{i'})\big)\\
&= \Pr(D_v^+(m) - D_v^+(0) > \lambda)\\
&\le \Pr(\exists i'\le m: V_{i'} \le b \text{ and } D_v^+(i') - D_v^+(0)>\lambda) \\
&\le \exp\left(-\frac{\lambda^2}{2(b+C\lambda) }\right)
= \exp\left(-\frac{s^2}{O(d)+O(s) }\right) = \exp\rbrac{-\Omega\rbrac{\frac{s^2}{d }}}.
\end{align*}
Let 
\begin{equation}\label{eqn:sdef}
    s = K\sqrt{d \log n}
\end{equation}
for some sufficiently large constant $K>0$. Then we can beat the union bound over all vertices since
\begin{align*}
    \Pr\big(\exists\;v,  i'\le m: \mc{E}_{i'-1} &\mbox{ holds but } D_v(i') > dp(t_{i'}) + \eps(t_{i'})\big)\\
    &\le n  \exp\rbrac{-\Omega\rbrac{\frac{s^2}{d }}} = n \exp(-\Omega(K^2 \log n)) = o(1).
\end{align*}

Hence, a.a.s.\  for every vertex $v$ the upper bound in~\eqref{eq:matching:goal} is valid. Note also that in light of our choice of $s$ in \eqref{eqn:sdef}, we may now look back to \eqref{eqn:alphareq1} to see how strong our result is. Our analysis holds until a step when the proportion of unmatched vertices, $2\alpha$, is a large constant times 
\[
 \rbrac{\frac{s}{d}}^{1/5} = K^{1/5} \left(\frac{\log n}{d}\right)^{1/10}.
\]
Thus, we can take $\alpha:=\rbrac{\frac{s}{d}}^{1/10}  = o(1)$, since we are assuming that $\frac{d}{\log n} \rightarrow \infty$. 

Finally, one can apply Freedman's inequality to the supermartingale $-D_v^-$ to show that the probability that the good event $\mc{E}_i$ fails due to the condition ``$D_v(tn) < dp(t) - \eps(t)$'' is also small.

\section{Further remarks}

Our goal in writing this paper is to make the differential equation method more accessible to other researchers because there have been several very nice applications of it, and we believe there are still many more to be had in the future. However, the most famous applications involve a potentially intimidating or overwhelming amount of details and make the method look harder than it really is. So we attempted to show the method using the most simple nontrivial examples. 

Now, for the interested reader we will suggest some papers to read next. Our goal is to suggest papers that are less gentle than this one, but still not too intimidating. Though this need not be one's first priority, it is valuable to see a proof of a black box theorem and for that we recommend that the reader see Warnke's short paper \cite{Warnke2020}. D\'iaz and Mitsche's ``cook-book'' survey \cite{DM2010} has many applications of Wormald's original black box theorem, so an interested reader might try reading a few of those. 

However, many of the most interesting applications of the differential equation method are not suitable for any known black box theorem. They require their own analysis typically using martingales and resembling the approach we took in this paper. For a nice short paper like that the reader should see Bohman, Frieze and Lubetzky's first paper on the triangle-removal process \cite{BFL10}. A good next step would be to read Bohman's original paper on the triangle-free process \cite{B2009}. The challenging part of that paper is to understand how Bohman bounded the independence number of the graph, and from that part the reader may begin to see how the analysis can benefit from augmenting the family of tracked variables to be much larger than what might na\"ively seem necessary. We further suggest the following (what we consider) easy paper \cites{BB2019}, medium papers \cites{BBoh2016, Warnke2014, BW2019}, and hard papers \cites{BFL2015, BK2010}. 

 \newcommand{\noop}[1]{}
\begin{bibdiv}
\begin{biblist}

\bib{Ach}{article}{
      author={Achlioptas, Dimitris},
      author={D'Souza, Raissa~M.},
      author={Spencer, Joel},
       title={Explosive percolation in random networks},
        date={2009},
        ISSN={0036-8075},
     journal={Science},
      volume={323},
      number={5920},
       pages={1453\ndash 1455},
         url={https://doi.org/10.1126/science.1167782},
      review={\MR{2502843}},
}

\bib{A1967}{article}{
      author={Azuma, Kazuoki},
       title={Weighted sums of certain dependent random variables},
        date={1967},
        ISSN={0040-8735},
     journal={Tohoku Math. J. (2)},
      volume={19},
       pages={357\ndash 367},
         url={https://doi.org/10.2748/tmj/1178243286},
      review={\MR{221571}},
}

\bib{BBal2016}{misc}{
      author={Bal, Deepak},
      author={Bennett, Patrick},
       title={The greedy matching algorithm in random regular graphs and
  hypergraphs},
        note={submitted},
}

\bib{BBoh2016}{article}{
      author={Bennett, Patrick},
      author={Bohman, Tom},
       title={A note on the random greedy independent set algorithm},
        date={2016},
        ISSN={1042-9832},
     journal={Random Structures Algorithms},
      volume={49},
      number={3},
       pages={479\ndash 502},
         url={https://doi.org/10.1002/rsa.20667},
      review={\MR{3545824}},
}

\bib{BB2019}{article}{
      author={Bennett, Patrick},
      author={Bohman, Tom},
       title={A natural barrier in random greedy hypergraph matching},
        date={2019},
        ISSN={0963-5483},
     journal={Combin. Probab. Comput.},
      volume={28},
      number={6},
       pages={816\ndash 825},
         url={https://doi.org/10.1017/s0963548319000051},
      review={\MR{4015657}},
}

\bib{B2009}{article}{
      author={Bohman, Tom},
       title={The triangle-free process},
        date={2009},
        ISSN={0001-8708},
     journal={Adv. Math.},
      volume={221},
      number={5},
       pages={1653\ndash 1677},
         url={https://doi.org/10.1016/j.aim.2009.02.018},
      review={\MR{2522430}},
}

\bib{BFL10}{article}{
      author={Bohman, Tom},
      author={Frieze, Alan},
      author={Lubetzky, Eyal},
       title={A note on the random greedy triangle-packing algorithm},
        date={2010},
        ISSN={2156-3527},
     journal={J. Comb.},
      volume={1},
      number={3-4},
       pages={477\ndash 488},
         url={https://doi.org/10.4310/JOC.2010.v1.n4.a5},
      review={\MR{2799220}},
}

\bib{BFL2015}{article}{
      author={Bohman, Tom},
      author={Frieze, Alan},
      author={Lubetzky, Eyal},
       title={Random triangle removal},
        date={2015},
        ISSN={0001-8708},
     journal={Adv. Math.},
      volume={280},
       pages={379\ndash 438},
         url={https://doi.org/10.1016/j.aim.2015.04.015},
      review={\MR{3350225}},
}

\bib{BK2010}{article}{
      author={Bohman, Tom},
      author={Keevash, Peter},
       title={The early evolution of the {$H$}-free process},
        date={2010},
        ISSN={0020-9910},
     journal={Invent. Math.},
      volume={181},
      number={2},
       pages={291\ndash 336},
         url={https://doi.org/10.1007/s00222-010-0247-x},
      review={\MR{2657427}},
}

\bib{BK2013}{incollection}{
      author={Bohman, Tom},
      author={Keevash, Peter},
       title={Dynamic concentration of the triangle-free process},
        date={2013},
   booktitle={The {S}eventh {E}uropean {C}onference on {C}ombinatorics, {G}raph
  {T}heory and {A}pplications},
      series={CRM Series},
      volume={16},
   publisher={Ed. Norm., Pisa},
       pages={489\ndash 495},
         url={https://doi.org/10.1007/978-88-7642-475-5_78},
      review={\MR{3185850}},
}

\bib{BK2020}{article}{
      author={Bohman, Tom},
      author={Keevash, Peter},
       title={Dynamic concentration of the triangle-free process},
        date={2021},
        ISSN={1042-9832},
     journal={Random Structures Algorithms},
      volume={58},
      number={2},
       pages={221\ndash 293},
         url={https://doi.org/10.1002/rsa.20973},
      review={\MR{4201797}},
}

\bib{BK2006}{article}{
      author={Bohman, Tom},
      author={Kravitz, David},
       title={Creating a giant component},
        date={2006},
        ISSN={0963-5483},
     journal={Combin. Probab. Comput.},
      volume={15},
      number={4},
       pages={489\ndash 511},
         url={https://doi.org/10.1017/S0963548306007486},
      review={\MR{2238042}},
}

\bib{BW2019}{article}{
      author={Bohman, Tom},
      author={Warnke, Lutz},
       title={Large girth approximate {S}teiner triple systems},
        date={2019},
     journal={J. London Math. Soc.},
      volume={100},
       pages={895\ndash 913},
}

\bib{Bollobas2001}{book}{
      author={Bollob\'{a}s, B\'{e}la},
       title={Random graphs},
     edition={Second},
      series={Cambridge Studies in Advanced Mathematics},
   publisher={Cambridge University Press, Cambridge},
        date={2001},
      volume={73},
        ISBN={0-521-80920-7; 0-521-79722-5},
         url={https://doi.org/10.1017/CBO9780511814068},
      review={\MR{1864966}},
}

\bib{DM2010}{article}{
      author={D\'{i}az, Josep},
      author={Mitsche, Dieter},
       title={The cook-book approach to the differential equation method},
        date={2010},
     journal={Computer Science Review},
      volume={4},
       pages={129\ndash 151},
}

\bib{ER1960}{article}{
      author={Erd\H{o}s, P.},
      author={R\'{e}nyi, A.},
       title={On the evolution of random graphs},
        date={1960},
        ISSN={0541-9514},
     journal={Magyar Tud. Akad. Mat. Kutat\'{o} Int. K\"{o}zl.},
      volume={5},
       pages={17\ndash 61},
      review={\MR{125031}},
}

\bib{FGM2020}{article}{
      author={Fiz~Pontiveros, Gonzalo},
      author={Griffiths, Simon},
      author={Morris, Robert},
       title={The triangle-free process and ${R}(3, k)$},
        date={2020},
     journal={Mem. Amer. Math. Soc.},
      volume={263},
       pages={125pp},
}

\bib{F1975}{article}{
      author={Freedman, David~A.},
       title={On tail probabilities for martingales},
        date={1975},
        ISSN={0091-1798},
     journal={Ann. Probability},
      volume={3},
       pages={100\ndash 118},
         url={https://doi.org/10.1214/aop/1176996452},
      review={\MR{380971}},
}

\bib{FK2016}{book}{
      author={Frieze, Alan},
      author={Karo\'{n}ski, Micha\l},
       title={Introduction to random graphs},
   publisher={Cambridge University Press, Cambridge},
        date={2016},
        ISBN={978-1-107-11850-8},
         url={https://doi.org/10.1017/CBO9781316339831},
      review={\MR{3675279}},
}

\bib{GW2020}{article}{
      author={Guo, He},
      author={Warnke, Lutz},
       title={Packing nearly optimal {R}amsey {$R(3,t)$} graphs},
        date={2020},
        ISSN={0209-9683},
     journal={Combinatorica},
      volume={40},
      number={1},
       pages={63\ndash 103},
         url={https://doi.org/10.1007/s00493-019-3921-7},
      review={\MR{4078812}},
}

\bib{H1963}{article}{
      author={Hoeffding, Wassily},
       title={Probability inequalities for sums of bounded random variables},
        date={1963},
        ISSN={0162-1459},
     journal={J. Amer. Statist. Assoc.},
      volume={58},
       pages={13\ndash 30},
  url={http://links.jstor.org/sici?sici=0162-1459(196303)58:301<13:PIFSOB>2.0.CO;2-D&origin=MSN},
      review={\MR{144363}},
}

\bib{JLR2000}{book}{
      author={Janson, Svante},
      author={\L{}uczak, Tomasz},
      author={Ruci\'nski, Andrzej},
       title={Random graphs},
      series={Wiley-Interscience Series in Discrete Mathematics and
  Optimization},
   publisher={Wiley-Interscience, New York},
        date={2000},
        ISBN={0-471-17541-2},
         url={https://doi.org/10.1002/9781118032718},
      review={\MR{1782847}},
}

\bib{JK1977}{book}{
      author={Johnson, Norman~L.},
      author={Kotz, Samuel},
       title={Urn models and their application},
   publisher={John Wiley \& Sons, New York-London-Sydney},
        date={1977},
        note={An approach to modern discrete probability theory, Wiley Series
  in Probability and Mathematical Statistics},
      review={\MR{0488211}},
}

\bib{KS1981}{inproceedings}{
      author={Karp, Richard},
      author={Sipser, Michael},
       title={Maximum matching in sparse random graphs},
        date={1981},
   booktitle={{In Proceedings of the 22nd Annual Symposium on Foundations of
  Computer Science, IEEE Comput. Soc. Press}},
       pages={364\ndash 375},
}

\bib{Kim1995}{article}{
      author={Kim, Jeong~Han},
       title={The {R}amsey number {$R(3,t)$} has order of magnitude {$t^2/\log
  t$}},
        date={1995},
        ISSN={1042-9832},
     journal={Random Structures Algorithms},
      volume={7},
      number={3},
       pages={173\ndash 207},
         url={https://doi.org/10.1002/rsa.3240070302},
      review={\MR{1369063}},
}

\bib{Kurtz1970}{article}{
      author={Kurtz, Thomas~G.},
       title={Solutions of ordinary differential equations as limits of pure
  jump {M}arkov processes},
        date={1970},
        ISSN={0021-9002},
     journal={J. Appl. Probability},
      volume={7},
       pages={49\ndash 58},
         url={https://doi.org/10.2307/3212147},
      review={\MR{254917}},
}

\bib{RW2012}{article}{
      author={Riordan, Oliver},
      author={Warnke, Lutz},
       title={Achlioptas process phase transitions are continuous},
        date={2012},
        ISSN={1050-5164},
     journal={Ann. Appl. Probab.},
      volume={22},
      number={4},
       pages={1450\ndash 1464},
         url={https://doi.org/10.1214/11-AAP798},
      review={\MR{2985166}},
}

\bib{Rodl1985}{article}{
      author={R\"{o}dl, Vojt\v{e}ch},
       title={On a packing and covering problem},
        date={1985},
        ISSN={0195-6698},
     journal={European J. Combin.},
      volume={6},
      number={1},
       pages={69\ndash 78},
         url={https://doi.org/10.1016/S0195-6698(85)80023-8},
      review={\MR{793489}},
}

\bib{RW1992}{article}{
      author={Ruci\'{n}ski, A.},
      author={Wormald, N.~C.},
       title={Random graph processes with degree restrictions},
        date={1992},
        ISSN={0963-5483},
     journal={Combin. Probab. Comput.},
      volume={1},
      number={2},
       pages={169\ndash 180},
         url={https://doi.org/10.1017/S0963548300000183},
      review={\MR{1179247}},
}

\bib{SW2007}{article}{
      author={Spencer, Joel},
      author={Wormald, Nicholas},
       title={Birth control for giants},
        date={2007},
        ISSN={0209-9683},
     journal={Combinatorica},
      volume={27},
      number={5},
       pages={587\ndash 628},
         url={https://doi.org/10.1007/s00493-007-2163-2},
      review={\MR{2375718}},
}

\bib{TAW2007}{article}{
      author={Telcs, Andr\'{a}s},
      author={Wormald, Nicholas},
      author={Zhou, Sanming},
       title={Hamiltonicity of random graphs produced by 2-processes},
        date={2007},
        ISSN={1042-9832},
     journal={Random Structures Algorithms},
      volume={31},
      number={4},
       pages={450\ndash 481},
         url={https://doi.org/10.1002/rsa.20133},
      review={\MR{2362639}},
}

\bib{Warnke2014}{article}{
      author={Warnke, Lutz},
       title={The {$C_\ell$}-free process},
        date={2014},
        ISSN={1042-9832},
     journal={Random Structures Algorithms},
      volume={44},
      number={4},
       pages={490\ndash 526},
         url={https://doi.org/10.1002/rsa.20468},
      review={\MR{3214202}},
}

\bib{Warnke2020}{article}{
      author={Warnke, Lutz},
       title={On {W}ormald's differential equation method},
        date={\noop{}in press},
     journal={Combin. Probab. Comput.},
}

\bib{W1995}{article}{
      author={Wormald, Nicholas~C.},
       title={Differential equations for random processes and random graphs},
        date={1995},
        ISSN={1050-5164},
     journal={Ann. Appl. Probab.},
      volume={5},
      number={4},
       pages={1217\ndash 1235},
  url={http://links.jstor.org/sici?sici=1050-5164(199511)5:4<1217:DEFRPA>2.0.CO;2-A&origin=MSN},
      review={\MR{1384372}},
}

\bib{W1999}{inproceedings}{
      author={Wormald, Nicholas~C.},
       title={The differential equation method for random graph processes and
  greedy algorithms},
        date={1999},
   booktitle={{In Lectures on Approximation and Randomized Algorithms, PWN,
  Warsaw}},
       pages={73\ndash 155},
}

\bib{W2003}{incollection}{
      author={Wormald, Nicholas~C.},
       title={Analysis of greedy algorithms on graphs with bounded degrees},
        date={2003},
      volume={273},
       pages={235\ndash 260},
         url={https://doi.org/10.1016/S0012-365X(03)00241-3},
        note={EuroComb'01 (Barcelona)},
      review={\MR{2025954}},
}

\end{biblist}
\end{bibdiv}

\end{document}